\documentclass[preprint,12pt]{elsarticle1}

\usepackage{subfig}
\usepackage{graphicx}
\usepackage{caption,color}   
\usepackage{amssymb}
\usepackage{amsthm}
\usepackage{amsmath}
\usepackage{epic}
\usepackage{setspace}
\usepackage{float}
\usepackage{multirow}
\usepackage{hyperref}
\newtheorem{thm}{Theorem}[section]
\newtheorem{cor}[thm]{Corollary}

\newtheorem{lem}[thm]{Lemma}

\newtheorem{remark}[thm]{Remark}

\numberwithin{equation}{section}
\journal{}

\begin{document}
\begin{spacing}{1.15}
\begin{frontmatter}
\title{\textbf{Localization of the clique spectral version of Zykov's theorem}}

\author[1]{Changjiang Bu\corref{mycorrespondingauthor}}
\cortext[mycorrespondingauthor]{Corresponding author}
\ead{buchangjiang@hrbeu.edu.cn}
\author[1]{Jueru Liu}
\author[1]{Haotian Zeng}
\address[1]{School of Mathematical Sciences, Harbin Engineering University, Harbin 150001, PR China}


\begin{abstract}
Zykov's theorem shows that $r$-partite Tur\'{a}n graph uniquely has the maximum number of $K_t$ among all $n$-vertex $K_{r+1}$-free graphs for $2\le t\le r$.
The clique tensor is a high-order extension of the adjacency matrix of a graph.
Yu and Peng \cite{peng1} gave a spectral version of the Zykov's theorem via clique tensor.
In this paper, we give some upper bounds on the spectral radius of the clique tensor of a graph, which can be viewed as the localizations of the spectral version of Zykov's theorem.
\end{abstract}

\begin{keyword}
tensor, spectral radius, clique

\emph{AMS classification (2020):}
05C35, 15A42, 15A69
\end{keyword}
\end{frontmatter}

\section{Introduction}
The graphs considered throughout this paper are all simple and undirected.
For a graph $G$, if an induced subgraph of a subset of $V(G)$ is a complete graph, then the subset is called a \textit{clique}.
The \textit{clique number} of $G$ is the number of vertices of a largest clique in $G$, denoted by $\omega(G)$.
A clique is called a \textit{$t$-clique} if it has $t$ vertices.
Let $C_{t}(G)$ be the set of all $t$-cliques in $G$.
Let $\rho(G)$ denote the spectral radius of $G$.
In 2002, Nikiforov \cite{nikiforov2002} gave an upper bound on the spectral radius of graphs.

\begin{thm}\cite{nikiforov2002}\label{niki}
Let $G$ be an $n$-vertex graph with clique number $\omega$. Then
$$\rho(G)\le\sqrt{2|E(G)|\left(1-\frac{1}{w}\right)}.$$
Equality holds if and only if $G$ is a complete bipartite graph for $\omega=2$, or a complete regular $\omega$-partite graph for $\omega\ge3$ and $\omega$ divides $n$ (possibly with some isolated vertices).
\end{thm}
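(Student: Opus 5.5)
We will prove Theorem~\ref{niki} via the Motzkin--Straus inequality and the Rayleigh quotient. Let $x$ be a unit nonnegative Perron eigenvector of the adjacency matrix $A$, so $\rho=\rho(G)=\sum_{ij\in E(G)}2x_ix_j$. The idea is to bound a weighted quantity through Cauchy--Schwarz: writing $\rho x_i=\sum_{j\sim i}x_j$ gives
$$\rho^2=\rho^2\sum_i x_i^2=\sum_i\Big(\sum_{j\sim i}x_j\Big)^2,$$
and a second expression is needed that connects $\rho^2$ to the Motzkin--Straus functional.

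\textbf{Step 1.} Apply Cauchy--Schwarz edgewise:
$$\rho^2=\Big(\sum_{ij\in E}2x_ix_j\Big)^2\le 2|E|\cdot\sum_{ij\in E}2x_i^2x_j^2 .$$
The sum $\sum_{ij\in E}2x_i^2x_j^2$ equals $y^TAy$ with $y_i=x_i^2$. Here $y\ge0$ and $\sum_i y_i=1$, so $y$ lies on the standard simplex.

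\textbf{Step 2.} Invoke Motzkin--Straus: for $y$ in the simplex, $y^TAy\le 1-1/\omega$. Combining with Step~1 gives
$$\rho^2\le 2|E|\Big(1-\frac1\omega\Big),$$
which is the inequality. For completeness I would sketch Motzkin--Straus by the standard shifting argument. If $y_u,y_v>0$ for nonadjacent $u,v$, moving all weight onto whichever of $u,v$ has the larger $\sum_{w\sim\cdot}y_w$ does not decrease $y^TAy$, since the form is linear along that segment. Repeating this, the support becomes a clique of size $k\le\omega$, and there $y^TAy\le 1-1/k$.

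\textbf{Step 3 (equality).} This step is the main obstacle. Equality requires equality in both Cauchy--Schwarz and Motzkin--Straus.

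Equality in Cauchy--Schwarz forces $x_ix_j$ to be constant over all edges. On each non-isolated component this means $x_i=a$ on one side and $x_j=c/a$ on the other along edges; hence every odd cycle, if any exists, forces $x$ constant on that component. Since the Perron vector of a disconnected graph can be chosen supported on one component, we reduce to $G$ connected apart from isolated vertices.

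For $\omega=2$, $x_ix_j$ constant with $G$ bipartite gives $x$ proportional to the degree-like structure of a complete bipartite graph. More directly, equality in Motzkin--Straus with $\omega=2$ is automatic, so we use the eigenvector equation: $\rho x_i=d_i\,c/x_i$. Combined with the constancy of $x$ on each side, this forces all vertices in a part to see the whole other part, i.e.\ $G$ is complete bipartite.

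For $\omega\ge3$ a triangle exists, so $x$ is constant on the support. Then $y$ is uniform on the $n'$ non-isolated vertices, so $y^TAy=2|E|/n'^2$. Equality with $1-1/\omega$ gives $|E|=\frac{n'^2}{2}(1-\frac1\omega)$, which by Turán's theorem forces $G$ to be the regular Turán graph, with $\omega\mid n'$. Conversely, one checks these graphs attain equality directly, since they are regular with $\rho=2|E|/n'$.

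The delicate points are handling disconnected graphs and isolated vertices, and the $\omega=2$ case. For the latter I would first try the eigenvector-equation argument above.
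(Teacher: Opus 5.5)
Your proof of the inequality is correct and standard. The paper does not prove this theorem (it cites Nikiforov), but its proof of Theorem~\ref{3} at $t=2$ is exactly your chain: H\"older becomes edgewise Cauchy--Schwarz, and Lemma~\ref{w1} plays the role of Motzkin--Straus applied to $y_i=x_i^2$. Your reduction to a single nontrivial component is sound, and so is your $\omega\ge3$ equality argument (a triangle makes $x$ constant, then Tur\'an's equality case).

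The genuine gap is the equality case $\omega=2$. Your claim that equality in Motzkin--Straus is automatic there is false: $y$ uniform on $C_6$ gives $y^{\mathsf{T}}Ay=1/3<1/2$. The eigenvector argument you use instead does not prove completeness. From $x_ix_j=c$ on edges and $\rho x_i=\sum_{j\sim i}x_j$ you get $\rho x_i^2=c\,d_i$. With $x\equiv a$ on one side $X$ and $x\equiv b$ on the other side $Y$, this only says that $G$ is biregular. Every regular non-complete bipartite graph, e.g.\ $C_6$, has a constant Perron vector, so it satisfies $x_ix_j=c$ on all edges and the eigenvalue equation. Yet $\rho(C_6)^2=4<6=|E(C_6)|$, so equality fails.

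What excludes such graphs is exactly the second equality, which you set aside. It is easy to use. From $|X|a^2+|Y|b^2=1$ and AM--GM, $a^2b^2\le 1/(4|X||Y|)$, so $\frac{1}{2}=y^{\mathsf{T}}Ay=2|E|a^2b^2\le |E|/(2|X||Y|)$, which forces $|E|=|X||Y|$. Equivalently, biregularity gives $\rho^2=d_Xd_Y$ and $|E|=|X|d_X$, so $\rho^2=|E|$ yields $d_Y=|X|$. Alternatively, since $c>0$ makes $y$ positive on every non-isolated vertex, the equality clause of Motzkin--Straus (Lemma~\ref{w1} with $t=2$, as used in the paper's equality analysis for Theorem~\ref{3}) gives a complete bipartite graph at once.

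You also assume without comment that $G$ is bipartite when $\omega=2$, but a triangle-free graph may contain odd cycles. In that case your parity argument makes $x$ constant on the nontrivial component. Your $\omega\ge3$ argument, with Mantel's theorem in place of Tur\'an's, then yields $K_{n'/2,n'/2}$, contradicting the odd cycle. So this case cannot occur, but it has to be said.

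Finally, your converse ``they are regular with $\rho=2|E|/n'$'' does not cover $K_{p,q}$ with $p\ne q$. There the check is $\rho^2=pq=|E|$.
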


In fact, the above conclusion implies the concise Tur\'{a}n's theorem.

\begin{thm}\cite{turan1961research}
Let $G$ be an $n$-vertex $K_{r+1}$-free graph (i.e., containing no copy of the complete graph $K_{r+1}$). Then $$|E(G)|\le\left(1-\frac{1}{r}\right)\frac{n^2}{2}.$$ Equality holds if and only if $r$ divides $n$ and $G$ is a complete regular $r$-partite graph.
\end{thm}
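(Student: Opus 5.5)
We derive Turán's theorem from Theorem~\ref{niki}, combined with the elementary lower bound $\rho(G)\ge 2|E(G)|/n$. Let $G$ be an $n$-vertex $K_{r+1}$-free graph with clique number $\omega$, so $\omega\le r$; assume $|E(G)|>0$, since otherwise there is nothing to prove.

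First, the Rayleigh quotient of the adjacency matrix $A$ at the all-ones vector $\mathbf{j}$ gives
$$\rho(G)\ge \frac{\mathbf{j}^{T}A\mathbf{j}}{\mathbf{j}^{T}\mathbf{j}}=\frac{2|E(G)|}{n},$$
with equality if and only if $\mathbf{j}$ is an eigenvector for $\rho(G)$, that is, if and only if $G$ is regular. Next, Theorem~\ref{niki} together with $1-\frac1\omega\le 1-\frac1r$ gives
$$\frac{4|E(G)|^2}{n^2}\le \rho(G)^2\le 2|E(G)|\left(1-\frac1\omega\right)\le 2|E(G)|\left(1-\frac1r\right).$$
Dividing by $4|E(G)|/n^2$ yields $|E(G)|\le \left(1-\frac1r\right)\frac{n^2}{2}$.

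For the equality case, all three inequalities above must be tight. Tightness of the Rayleigh bound forces $G$ to be regular. Tightness of the last step forces $\omega=r$. Tightness of Theorem~\ref{niki} then says that $G$ is complete bipartite (when $r=2$) or complete regular $r$-partite with $r\mid n$ (when $r\ge3$), possibly together with isolated vertices. Since $G$ is regular with at least one edge, there are no isolated vertices. For $r=2$, a regular complete bipartite graph $K_{a,b}$ has $a=b$, so $2\mid n$ and $G$ is complete regular bipartite. Conversely, the complete regular $r$-partite graph on $n$ vertices with $r\mid n$ has exactly $\binom{r}{2}(n/r)^2=\left(1-\frac1r\right)\frac{n^2}{2}$ edges, so equality is attained.

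The main obstacle is the equality characterization. The inequality itself follows routinely once the Rayleigh bound is in place. For equality, however, one must check that the "possibly with isolated vertices" clause of Theorem~\ref{niki} and the non-regular bipartite case $K_{a,b}$ with $a\neq b$ are both excluded. Both are excluded by the regularity coming from tightness of the Rayleigh quotient, which is why the characterization of equality in $\rho(G)\ge 2|E(G)|/n$ must be stated precisely.
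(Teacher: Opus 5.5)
Your proposal is correct and is essentially the route the paper intends: the paper gives no proof of Turán's theorem, only the remark that Theorem~\ref{niki} implies it, and your combination of $\rho(G)\ge 2|E(G)|/n$ with Theorem~\ref{niki} makes that derivation explicit. You also handle equality properly: regularity, which comes from tightness of the Rayleigh bound, rules out isolated vertices and unbalanced $K_{a,b}$. One small loose end is that the edgeless case is not quite ``nothing to prove'' for the equality clause. There, equality forces $r=1$, and the edgeless graph is the complete $1$-partite graph, so the characterization still holds.
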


The Tur\'{a}n number for a graph $F$ is the maximum number of edges in an $n$-vertex $F$-free graph. Some results on Tur\'{a}n problem and spectral Tur\'{a}n problem can be referred to \cite{alonjctb2024,cioaba,jctb2023,wilf}.
Brada$\check{\mathrm{c}}$ \cite{local1} and Malec and Tompkins \cite{1local2023} gave a localized version of concise Tur\'{a}n's theorem.
For an edge $e\in E(G)$, let $\alpha(e)$ be the order of the largest clique in $G$ containing $e$.

\begin{thm}\cite{local1,1local2023}\label{local1}
Let $G$ be an $n$-vertex graph. Then
$$\sum_{e\in E(G)}\frac{\alpha(e)}{\alpha(e)-1}\le\frac{n^2}{2}.$$
Equality holds if and only if $G$ is a complete multi-partite graph with vertex classes of equal size.
\end{thm}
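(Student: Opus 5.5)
The plan is to prove the inequality of Theorem~\ref{local1} by induction on $n$, deleting a maximum clique at each step. The key fact is that $x\mapsto \frac{x}{x-1}$ is decreasing for $x\ge 2$. When we pass to an induced subgraph $H\subseteq G$, every edge $e$ of $H$ has $\alpha_H(e)\le \alpha_G(e)$, so its weight in $H$ is at least its weight in $G$. The base cases $n\le 2$ are trivial.

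For the inductive step, let $K$ be a maximum clique of $G$ with $|K|=t=\omega(G)\ge 2$; if $G$ has no edges there is nothing to prove. We split $E(G)$ into three parts.

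\emph{Edges inside $K$.} Each lies in a clique of size $t$, and no larger clique exists, so $\alpha(e)=t$. Their total weight is $\binom{t}{2}\frac{t}{t-1}=\frac{t^2}{2}$.

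\emph{Edges between a vertex $v\notin K$ and $K$.} Let $d$ be the number of neighbours of $v$ in $K$. Maximality of $K$ forces $d\le t-1$. These $d$ neighbours together with $v$ form a clique, so each such edge has $\alpha(e)\ge d+1$. Hence these edges contribute at most $d\cdot\frac{d+1}{d}=d+1\le t$ when $d\ge 1$, and $0$ when $d=0$.

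\emph{Edges of $G-K$.} By monotonicity their total $G$-weight is at most their $(G-K)$-weight, which by induction is at most $\frac{(n-t)^2}{2}$.

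Summing the three parts gives
$$\sum_{e\in E(G)}\frac{\alpha(e)}{\alpha(e)-1}\le \frac{t^2}{2}+t(n-t)+\frac{(n-t)^2}{2}=\frac{n^2}{2}.$$

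For the equality case, first verify directly that a complete $r$-partite graph with classes of size $n/r$ attains the bound. It has $\binom{r}{2}(n/r)^2$ edges, each with $\alpha(e)=r$, so the sum is $\binom{r}{2}\frac{n^2}{r^2}\cdot\frac{r}{r-1}=\frac{n^2}{2}$.

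The converse is the step I expect to be the main obstacle, and the plan is to trace equality through every inequality above. Equality forces the following:
\begin{itemize}
\item[(a)] every $v\notin K$ has exactly $d=t-1$ neighbours in $K$, so it misses exactly one vertex of $K$, and the relevant edges have $\alpha(e)=t$;
\item[(b)] $G-K$ attains equality, so by induction it is complete $s$-partite with equal classes (or edgeless, which is the case of $s=1$ class);
\item[(c)] every edge of $G-K$ has the same $\alpha$ in $G$ as in $G-K$.
\end{itemize}
Assign each $v\notin K$ to the class $V_u$ of the unique vertex $u\in K$ it misses, and put $u$ in $V_u$ as well.

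We then aim to show that $G$ is complete $t$-partite with parts $V_u$. Two vertices $v,w\notin K$ missing different vertices $u\neq u'$ must be adjacent. Otherwise one would obtain a contradiction with (c) or with the structure from (b): the common neighbourhood in $K$, of size $t-2$, together with one of $v,w$ gives cliques that should extend. Two vertices missing the same $u$ must be non-adjacent. Otherwise $v,w$ together with $K\setminus\{u\}$ form a $(t+1)$-clique, contradicting maximality.

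Hence $G$ is complete $t$-partite. Equal class sizes then follow from (b) applied to $G-K$, whose classes are the $V_u\setminus\{u\}$, combined with the convexity gap $t(n-t)$ versus $\sum_u |V_u\setminus\{u\}|\cdot(t-1)$. I expect the adjacency argument for vertices missing different $u$'s to need the most care, using (c) and the induction hypothesis in tandem.
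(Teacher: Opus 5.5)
The paper does not prove this theorem; it is quoted from the literature as background. Within the paper's framework it is immediate. Lemma~\ref{w1} with $t=2$ and $x=\frac{1}{n}\mathbf{1}$ reads $\frac{1}{n^2}\sum_{e\in E(G)}\frac{2\alpha(e)}{\alpha(e)-1}\le 1$. Its equality clause ($G$ complete $\omega$-partite with $|V_i|/n=1/\omega$) is exactly the stated characterization.

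Your clique-deletion induction is an elementary alternative that avoids the Motzkin--Straus-type machinery behind Lemma~\ref{w1}. Its inequality part is correct and complete. Your items (a)--(c) are correct, and so is the non-adjacency of two outside vertices that miss the same vertex of $K$.

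\textbf{The gap.} It lies in the step you flagged: showing that outside vertices $v,w$ missing different vertices $u\neq u'$ of $K$ are adjacent. Your hint gives no contradiction: $\{v\}\cup(K\setminus\{u,u'\})$ simply extends to the $t$-clique $\{v\}\cup(K\setminus\{u\})$, which violates nothing.

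The missing idea is to use (c) to force $G-K$ to have exactly $t$ classes, and then apply pigeonhole. Write $u_x$ for the vertex of $K$ missed by $x\notin K$.
\begin{enumerate}
\item[(i)] For an edge $xy$ of $G-K$, your same-$u$ argument gives $u_x\neq u_y$. So $\{x,y\}\cup(K\setminus\{u_x,u_y\})$ is a $t$-clique, and $\alpha_G(xy)=t$.
\item[(ii)] By (c), $\alpha_{G-K}(xy)=t$. Every edge of a complete $s$-partite graph has $\alpha=s$, so the balanced complete multipartite graph $G-K$ from (b) has exactly $t$ classes $W_1,\ldots,W_t$.
\item[(iii)] Vertices in different classes $W_i$ are adjacent, so they miss different vertices of $K$. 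Hence the sets $\{u_x: x\in W_i\}$ are $t$ pairwise disjoint nonempty subsets of the $t$-set $K$, and therefore distinct singletons $\{u_i\}$.
\end{enumerate}
Consequently $v,w$ with $u_v\neq u_w$ lie in different classes and are adjacent. Moreover $W_i=V_{u_i}\setminus\{u_i\}$, and the sizes $|V_{u_i}|=|W_i|+1$ are all equal directly by (b).

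The ``convexity gap'' comparison is therefore unnecessary. As written it compares the weight $t(n-t)$ of the cross edges with their number $(n-t)(t-1)$, which does not yield anything.

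\textbf{A smaller correction.} Drop the parenthetical in (b). An edgeless graph on $k\ge 1$ vertices has weight $0<\frac{k^2}{2}$, so it never attains equality. In the equality case, either $n=t$ and $G=K_t$, or $G-K$ has an edge. The second case is what step (i) above requires.
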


Liu and Ning \cite{liulelejctb} gave an upper bound on the spectral radius of a graph in terms of the order of the largest clique containing each edge, which is a localized version of spectral Tur\'{a}n's theorem. And some other results on localization of spectral Tur\'{a}n's theorem were given in \cite{liulelearxiv}.

\begin{thm}\cite{liulelejctb}\label{liulelejctb}
Let $G$ be a graph with clique number $\omega$. Then
$$\rho(G)\le\sqrt{2\sum_{e\in E(G)}\frac{\alpha(e)-1}{\alpha(e)}}.$$
Equality holds if and only if $G$ is a complete bipartite graph for $\omega=2$, or a complete regular $\omega$-partite graph for $\omega\ge3$ and $\omega$ divides $n$ (possibly with some isolated vertices).
\end{thm}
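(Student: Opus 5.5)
The plan is to argue variationally with the Perron vector and then pass to a weighted form of the localized Tur\'an theorem (Theorem~\ref{local1}). Let $x$ be a nonnegative unit eigenvector for $\rho(G)$, so that
$$\rho(G)=2\sum_{uv\in E(G)}x_ux_v .$$
By Cauchy--Schwarz, splitting each term with the weight $\frac{\alpha(e)-1}{\alpha(e)}$,
$$\rho^2\le 4\Bigl(\sum_{e}\frac{\alpha(e)-1}{\alpha(e)}\Bigr)\Bigl(\sum_{uv\in E}\frac{\alpha(uv)}{\alpha(uv)-1}x_u^2x_v^2\Bigr).$$
It therefore suffices to prove
$$\sum_{uv\in E}\frac{\alpha(uv)}{\alpha(uv)-1}\,x_u^2x_v^2\le \frac12\Bigl(\sum_v x_v^2\Bigr)^2=\frac12 .$$
Setting $y_v=x_v^2\ge 0$ with $\sum y_v=1$, this is exactly a weighted version of Theorem~\ref{local1}:
$$\sum_{uv\in E}\frac{\alpha(uv)}{\alpha(uv)-1}\,y_uy_v\le\frac12\Bigl(\sum_v y_v\Bigr)^2 \quad\text{for all } y\ge0.$$

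The key step is this weighted inequality, and I would prove it in the Motzkin--Straus style. Among maximizers on the simplex, choose one with minimal support. For two nonadjacent vertices $u,w$ in the support, the objective is linear along the segment that moves mass between $y_u$ and $y_w$. This holds because there is no $y_uy_w$ term, and the coefficients $\alpha(e)$ depend only on $G$, not on $y$. Hence mass can be shifted entirely onto one of them without decreasing the objective, which reduces the support. So we may assume the support is a clique $K$ of size $k$.

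On a clique, each edge inside $K$ has $\alpha(e)\ge k$, so the coefficient satisfies $\frac{\alpha}{\alpha-1}\le\frac{k}{k-1}$. The objective is then at most
$$\frac{k}{k-1}\sum_{u<v\in K}y_uy_v\le\frac{k}{k-1}\cdot\frac{k-1}{2k}=\frac12,$$
by the standard bound $\sum_{u<v}y_uy_v\le\frac{k-1}{2k}$ under $\sum y=1$. I expect the main obstacle to be organizing this shifting argument carefully, since the coefficients vary from edge to edge; one has to check that linearity really holds and that no edge disappears in a way that changes the $\alpha$ values. It does not, because $G$ is fixed throughout. The alternative route, rational approximation plus vertex blow-up, fails because blow-ups alter $\alpha$, so I would avoid it.

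For equality, I would trace back through the argument. Equality in Cauchy--Schwarz forces $x_ux_v\cdot\frac{\alpha}{\alpha-1}$ to be proportional to $\frac{\alpha-1}{\alpha}$ on the edges. Equality in the weighted inequality forces all $\alpha(e)=\omega$ on the support. These conditions should give a complete $\omega$-partite graph with $x$ constant on its vertices, hence a regular one, plus isolated vertices. The case $\omega=2$ is special: for a complete bipartite graph the Perron vector is constant on each side, so no regularity is forced and any complete bipartite graph attains equality. This matches the statement.
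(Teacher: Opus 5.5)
Your proof of the inequality is correct. It is the $t=2$ case of the paper's route: the paper only cites this theorem and recovers it from Theorem~\ref{3}. That route is Cauchy--Schwarz with the weights $\frac{\alpha(e)-1}{\alpha(e)}$, followed by the localized Motzkin--Straus bound. The paper takes that bound from Lemma~\ref{w1}, which comes from Lemma~\ref{macl}; you prove it directly by minimal-support shifting, and that argument is sound.

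The gap is in the equality characterization. The minimal-support argument only shows that \emph{some} maximizer lives on a clique. It says nothing about the particular maximizer $y=x^{2}$. So your claim that equality in the weighted inequality ``forces all $\alpha(e)=\omega$ on the support'' is not derived, and for the weighted inequality alone it is false. Take a triangle $abc$ with a pendant edge $cd$ and set $y_c=y_d=\tfrac12$. Then $\sum_{uv\in E}\frac{\alpha(uv)}{\alpha(uv)-1}y_uy_v=2\cdot\tfrac14=\tfrac12$, yet $\alpha(cd)=2<3=\omega$. Also, the Cauchy--Schwarz equality condition is that $\frac{\alpha(uv)}{\alpha(uv)-1}x_ux_v$ is a positive constant on edges. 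Equivalently, $x_ux_v$ (not $x_ux_v\frac{\alpha}{\alpha-1}$) is proportional to $\frac{\alpha-1}{\alpha}$.

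Two ingredients are missing.
\begin{itemize}
\item[(i)] That positive constant forces $x_ux_v>0$ on \emph{every} edge. Hence $S=\mathrm{supp}(y)$ is exactly the set of non-isolated vertices. This is what ties the equality case to $\omega$; a Perron vector of a disconnected graph could otherwise vanish on whole components.
\item[(ii)] You need a structural statement for a maximizer with this full support $S$. First-order conditions at a maximizer on the simplex, together with Euler's identity, give $\sum_{u\sim v}\frac{\alpha(uv)}{\alpha(uv)-1}y_u=1$ for every $v\in S$. Now suppose $u\not\sim w$ and $w\not\sim z$ but $u\sim z$, all in $S$. The feasible perturbation $y+\epsilon(e_u+e_z-2e_w)$ then has zero first-order change and raises the objective by $\epsilon^2\frac{\alpha(uz)}{\alpha(uz)-1}>0$, a contradiction. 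Hence non-adjacency is an equivalence relation on $S$, and $G$ minus its isolated vertices is complete $\omega$-partite.
\end{itemize}

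Only after this do you get the remaining conclusions.
\begin{itemize}
\item[(a)] $\alpha\equiv\omega$ on all edges.
\item[(b)] The parts have equal masses $\sum_{v\in V_i}x_v^2=\frac1\omega$, from the equality case of $\sum_{i<j}Y_iY_j\le\frac{\omega-1}{2\omega}$.
\item[(c)] For $\omega\ge3$, $x_ux_v$ is constant on edges. Comparing through a vertex in a third part makes $x$ constant, hence $|V_1|=\cdots=|V_\omega|$.
\end{itemize}
The paper handles this step by using positivity of $x$ on vertices lying in $t$-cliques (from the H\"older equality condition) together with the equality clause of Lemma~\ref{w1}, i.e.\ Lemma~\ref{macl}. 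Your sketch has neither ingredient; it only asserts that the conditions ``should give'' the extremal graphs.
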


As an edge can be viewed as an induced subgraph of a $2$-clique, the generalized Tur\'{a}n number $\mathrm{ex}(n,H,F)$ studies the maximum number of copies of subgraphs $H$ in an $n$-vertex $F$-free graph.
The famous generalized Tur\'{a}n result standing on its own is the complete determination of $\mathrm{ex}(n, K_t, K_{r+1})$ by Zykov \cite{zykov} and Erd\H{o}s \cite{erdos1}.
Subsequently, Alon and Shikhelman \cite{alon} studied the function $\mathrm{ex}(n,H,F)$, some results on the generalized Tur\'{a}n numbers can be referred to \cite{cheneujc2025,Gerbnerjctb2020,luojctb2018,maj}.
In 2024, Kirsch and Nir \cite{2local2024} proposed a localized approach to generalized Tur\'{a}n problems and gave a localized version of Zykov's theorem by assigning weights to cliques of any size.
The spectral Tur\'{a}n problems have attracted considerable attention, but there are few studies on spectral versions of generalized Tur\'{a}n numbers.

In 2023, Liu and Bu \cite{liujoco} proposed the clique tensor of a graph and gave a generalization of the spectral Mantel's theorem.
Recently, some results on the spectral version of the generalized Tur\'{a}n number were given via clique tensor \cite{liu2026dm,peng3,peng1}.
In 2025, Yu and Peng \cite{peng1} gave a spectral version of Zykov's theorem, which shows that the complete regular $r$-partite graph attains the maximum $t$-clique spectral radius among all $n$-vertex $K_{r+1}$-free graphs for $2\le t\le r$.
In 2026, a tensor's spectral bound on the clique number was given \cite{liu2026dm}, which extends Nikiforov's theorem (Theorem \ref{niki}) to clique tensors.

\begin{thm}\cite{liu2026dm}\label{liu2026dm}
Let $G$ be a graph with clique number $\omega$. For $2\le t\le \omega$,
$$\rho_{t}(G)\le \frac{t}{\omega}\binom{\omega}{t}^{\frac{1}{t}}|C_{t}(G)|^{\frac{t-1}{t}}.$$
Moreover, if $G$ is a complete regular $\omega$-partite graph for $\omega\ge t\ge 2$, then the equality is achieved in the above inequality.
\end{thm}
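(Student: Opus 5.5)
We bound $\rho_t(G)$ using a nonnegative eigenvector of the clique tensor, split the resulting multilinear form with H\"older's inequality, and finish with a Motzkin--Straus type bound for the $t$-clique Lagrangian.

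\textbf{Setting up.} We use the standard definition of the $t$-clique tensor $\mathcal{A}_t(G)$ from \cite{liujoco}: it has order $t$ and dimension $n$, with $a_{i_1\cdots i_t}=\frac{1}{(t-1)!}$ when $\{i_1,\dots,i_t\}\in C_t(G)$ and $0$ otherwise. Since $\mathcal{A}_t(G)$ is nonnegative and symmetric, the Perron--Frobenius theory for nonnegative tensors gives
\[
\rho_t(G)=\max_{x\ge 0,\ \|x\|_t=1} x^{\mathrm T}(\mathcal{A}_t x^{t-1}) = \max_{x\ge 0,\ \|x\|_t=1}\ t\sum_{S\in C_t(G)}\prod_{i\in S}x_i .
\]
Fix a maximizing vector $x\ge 0$ with $\sum_i x_i^t=1$.

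\textbf{H\"older step.} Apply H\"older's inequality over the index set $C_t(G)$ with exponents $\frac{t}{t-1}$ and $t$:
\[
\sum_{S\in C_t(G)}\prod_{i\in S}x_i\le |C_t(G)|^{\frac{t-1}{t}}\Bigl(\sum_{S\in C_t(G)}\prod_{i\in S}x_i^t\Bigr)^{\frac1t}.
\]
Put $y_i=x_i^t$, so that $y$ lies in the standard simplex. It then suffices to prove the Lagrangian bound
\[
\lambda_t(G,y):=\sum_{S\in C_t(G)}\prod_{i\in S}y_i\le \binom{\omega}{t}\omega^{-t}.
\]
Plugging this in gives
\[
\rho_t(G)\le t\,|C_t(G)|^{\frac{t-1}{t}}\binom{\omega}{t}^{\frac1t}\frac1\omega,
\]
which is exactly the claimed inequality.

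\textbf{Main obstacle: the Lagrangian bound.} This is the generalized Motzkin--Straus inequality (Sós--Straus/Khadzhiivanov), and I would prove it by Zykov symmetrization.
\begin{enumerate}
\item Among the maximizers of $\lambda_t(G,\cdot)$ on the simplex, choose one with minimal support.
\item Suppose $u,v$ are nonadjacent vertices in the support. No $t$-clique contains both $u$ and $v$, so along the segment where $y_u+y_v$ is fixed, $\lambda_t$ is linear in $y_u$.
\item Moving all of the weight $y_u+y_v$ onto the better of $u,v$ does not decrease $\lambda_t$ and strictly shrinks the support, contradicting minimality. Hence the support induces a clique of some size $k\le\omega$.
\item On a $k$-clique, $\lambda_t=e_t(y)$ is the $t$-th elementary symmetric polynomial in $k$ variables with $\sum y_i=1$. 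Maclaurin's inequality then gives $e_t(y)\le \binom{k}{t}k^{-t}$.
\item Finally, $\binom{k}{t}k^{-t}=\frac1{t!}\prod_{j=0}^{t-1}\bigl(1-\frac jk\bigr)$ is nondecreasing in $k$, so the bound is at most $\binom{\omega}{t}\omega^{-t}$.
\end{enumerate}

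\textbf{Equality for complete regular $\omega$-partite graphs.} Let $G$ be complete regular $\omega$-partite with parts of size $m=n/\omega$. Then $|C_t(G)|=\binom{\omega}{t}m^t$. Take the uniform vector $x_i=n^{-1/t}$. The expression $t\sum_S\prod_{i\in S}x_i$ equals $t\binom{\omega}{t}m^t/n$, which is a lower bound for $\rho_t(G)$. A direct computation shows this value equals the right-hand side $\frac{t}{\omega}\binom{\omega}{t}^{1/t}\bigl(\binom{\omega}{t}m^t\bigr)^{(t-1)/t}$. Combined with the upper bound just proved, equality holds.
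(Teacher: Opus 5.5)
Your proof is correct. Its skeleton matches the paper's, but its key lemma is different.

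The paper does not reprove this statement; it cites it. It does, however, recover the inequality in the Remark after Theorem~\ref{3}. That proof runs: nonnegative eigenvector with $\|x\|_t=1$, H\"older with exponents $\frac{t}{t-1}$ and $t$, then a Lagrangian-type bound for $y=x^{[t]}$ on the simplex. This is your outline.

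\textbf{Where the routes differ.} The paper weights the H\"older step by $\binom{\alpha(I)}{t}\alpha(I)^{-t}$. It then invokes the localized inequality of Lemma~\ref{w1}, $\sum_{I}\alpha(I)^t\binom{\alpha(I)}{t}^{-1}y_I\le 1$, which it imports from the Maclaurin-type Lemma~\ref{macl}. Only at the end does it use $\alpha(I)\le\omega$ and the monotonicity of $\binom{k}{t}k^{-t}$.

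You use unweighted H\"older and the global S\'os--Straus bound $\sum_I y_I\le\binom{\omega}{t}\omega^{-t}$. You prove that bound yourself by Zykov symmetrization, Maclaurin on a single clique, and the same monotonicity. Your global bound is exactly what Lemma~\ref{w1} gives after replacing each weight by its value at $\omega$.

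\textbf{What each route buys.} Yours is self-contained and elementary. Symmetrization, though, only controls the global maximum of the Lagrangian, so by itself it does not give the localized Theorem~\ref{3} or that theorem's equality characterization. The paper's route needs the heavier localized lemma, but in return it proves the stronger localized bound, of which this statement is a corollary.

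\textbf{Equality clause.} The paper gives no argument for the equality clause. In fact it uses this cited equality inside the proof of Theorem~\ref{3} for $\omega\ge t+1$. Your uniform-vector computation settles it directly and correctly: $\rho_t(G)\ge t|C_t(G)|/n=\frac{t}{\omega}\binom{\omega}{t}m^{t-1}$, which equals the right-hand side. This is the same test vector the paper uses in its preliminaries to show $|C_t(G)|\le\frac{n}{t}\rho_t(G)$.
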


In this paper, we give some upper bounds on the $t$-clique spectral radius of graphs in terms of the order of the largest clique containing each clique or vertex, which can be viewed as the localizations of the spectral version of Zykov's theorem.


\section{Preliminaries}
In this section, some related definitions and lemmas are introduced.
For a positive integer $n$, let $[n]=\{1,2,\ldots,n\}$.
A $k$-order $n$-dimensional complex tensor $\mathcal{A}=(a_{i_1 i_2 \cdots i_k})$ is a multi-dimensional array with $n^k$ entries on complex number field $\mathbb{C}$, where $i_1i_2\cdots i_k\in[n]^k$.
Denote the set of $n$-dimensional complex vectors and the set of $k$-order $n$-dimensional complex tensors by $\mathbb{C}^{n}$ and $\mathbb{C}^{[k,n]}$, respectively.
For $\mathcal{A}=(a_{i_1 i_2 \cdots i_k})\in\mathbb{C}^{[k,n]}$ and $x=(x_1,\ldots,x_n)^{\mathsf{T}}\in\mathbb{C}^{n}$, $\mathcal{A}x^{k-1}$ is a vector in $\mathbb{C}^{n}$ whose $i$-th component is $$\left( \mathcal{A}x^{k-1} \right)_{i}=\sum\limits_{i_2,\ldots,i_k=1}^{n}a_{i i_2 \cdots i_k}x_{i_2}\cdots x_{i_k}.$$
A number $\lambda\in\mathbb{C}$ is called an \textit{eigenvalue} of $\mathcal{A}$ if there exists a nonzero vector $x\in\mathbb{C}^{n}$ such that $$\mathcal{A}x^{k-1}=\lambda x^{[k-1]},$$ where $x^{[k-1]}=(x_1^{k-1},\ldots,x_n^{k-1})^{\mathsf{T}}$ and $x$ is called an \textit{eigenvector} of $\mathcal{A}$ associated with $\lambda$ \cite{lim2005,qi2005}. The \textit{spectral radius} of $\mathcal{A}$ is the maximum modulus of all eigenvalues of $\mathcal{A}$, denoted by $\rho(\mathcal{A})$.

A tensor $\mathcal{A}$ is termed \textit{symmetric} if its entries remain invariant under any permutation of their indices.
Furthermore, if all entries of a tensor $\mathcal{A}$ are nonnegative, then $\mathcal{A}$ is referred to as a \textit{nonnegative tensor}.
Let $\mathbb{R}^{n}_{+}$ (resp. $\mathbb{R}^{n}_{++}$) be the set of all $n$-dimensional vectors with nonnegative (resp. positive) components.

\begin{lem}\cite{qi2013}\label{qi2013}
Let $\mathcal{A}=(a_{i_1 i_2 \cdots i_k})$ be a $k$-order $n$-dimensional symmetric nonnegative tensor. The spectral radius of $\mathcal{A}$ is equal to
$$\max\{\sum_{i_1,i_2,\ldots,i_k=1}^{n}a_{i_1 i_2 \cdots i_k}x_{i_1}x_{i_2}\cdots x_{i_k}:~ \sum_{i=1}^{n}x_i^k=1,~ (x_1,x_2,\ldots,x_n)^{\mathsf{T}}\in\mathbb{R}^{n}_{+}\}.$$
\end{lem}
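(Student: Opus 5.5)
This lemma is the Perron--Frobenius-type variational characterization for symmetric nonnegative tensors. The plan is to show that the quantity
$$M=\max\Big\{\mathcal{A}x^k:=\sum a_{i_1\cdots i_k}x_{i_1}\cdots x_{i_k}:\ \textstyle\sum_i x_i^k=1,\ x\in\mathbb{R}^n_+\Big\}$$
is at least every $|\lambda|$ with $\lambda$ an eigenvalue of $\mathcal{A}$, and that $M$ is itself an eigenvalue. Together these give $\rho(\mathcal{A})=M$. Existence of the maximum is immediate, because the feasible set $\{x\ge 0,\ \sum x_i^k=1\}$ is compact and $x\mapsto\mathcal{A}x^k$ is continuous.

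\textbf{Step 1: $M$ is an eigenvalue.} Let $x^*$ be a maximizer. The constraint set is the nonnegative part of a $C^1$ manifold, so the KKT conditions apply. Since $\mathcal{A}$ is symmetric, $\nabla(\mathcal{A}x^k)=k\,\mathcal{A}x^{k-1}$. Hence there are a multiplier $\mu$ and multipliers $\nu_i\ge 0$ with $\nu_i x^*_i=0$ such that
$$k(\mathcal{A}x^{*k-1})_i+\nu_i=\mu k (x^*_i)^{k-1}\quad\text{for every } i.$$
For indices with $x^*_i>0$ we have $\nu_i=0$, so $(\mathcal{A}x^{*k-1})_i=\mu (x^*_i)^{k-1}$. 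For indices with $x^*_i=0$ (and $k\ge 2$) the equation reads $k(\mathcal{A}x^{*k-1})_i+\nu_i=0$; both terms are nonnegative, so both vanish, and again $(\mathcal{A}x^{*k-1})_i=0=\mu(x^*_i)^{k-1}$. Thus $\mathcal{A}x^{*k-1}=\mu x^{*[k-1]}$. Taking the inner product with $x^*$ and using $\sum x_i^{*k}=1$ gives $\mu=\mathcal{A}x^{*k}=M$, so $M$ is an eigenvalue with a nonnegative eigenvector.

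\textbf{Step 2: every eigenvalue satisfies $|\lambda|\le M$.} Let $\mathcal{A}y^{k-1}=\lambda y^{[k-1]}$ with $y\ne 0$, normalized so that $\sum|y_i|^k=1$, and set $z=(|y_1|,\dots,|y_n|)$. By nonnegativity of the entries and the triangle inequality,
$$|\lambda|\,|y_i|^{k-1}\le (\mathcal{A}z^{k-1})_i .$$
Multiplying by $z_i$ and summing over $i$ gives $|\lambda|\le\mathcal{A}z^k\le M$, since $z$ is feasible.

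The only delicate point is Step 1, namely justifying the KKT conditions at boundary points where some $x^*_i=0$. The constraint gradient $k\,x^{*[k-1]}$ is nonzero there, and the active bound constraints have gradients $e_i$ for $i$ in the zero set, which are linearly independent of it because it is supported off that set. So LICQ holds. The boundary case could also be avoided entirely by the substitution $x_i=u_i^2$, which turns the problem into an optimization over the sphere $\sum u_i^{2k}=1$ with no inequality constraints. Alternatively, one could simply cite the Perron--Frobenius theory for nonnegative tensors of Chang--Pearson--Zhang together with symmetry, but the direct argument above is self-contained.
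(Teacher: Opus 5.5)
Your proof is correct. The paper gives no proof of this lemma and simply cites Qi. The standard derivation in the tensor literature combines two ingredients. The first is the Perron--Frobenius theorem for nonnegative tensors (Chang--Pearson--Zhang, Yang--Yang): $\rho(\mathcal{A})$ is an eigenvalue with a nonnegative eigenvector, which gives $\rho(\mathcal{A})\le M$. The second is the optimality conditions of the constrained problem, which show that $M$ is an H-eigenvalue, so $M\le\rho(\mathcal{A})$.

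Your Step 1 supplies the second ingredient. Your Step 2 replaces the Perron--Frobenius input with the elementary estimate $|\lambda|\,|y_i|^{k-1}\le(\mathcal{A}z^{k-1})_i$, summed against $z$. This bounds every eigenvalue, real or complex, directly by $M$, and it does not use symmetry. So your route needs only compactness, KKT under the LICQ you verify, and the triangle inequality, and it proves the weak Perron--Frobenius statement for symmetric nonnegative tensors instead of assuming it. Your handling of the zero coordinates is exactly right: $k(\mathcal{A}x^{*k-1})_i$ and $\nu_i$ are both nonnegative and sum to $0$, so both vanish.

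One caveat concerns your closing remark. The substitution $x_i=u_i^2$ does not by itself dispose of the boundary case. The first-order Lagrange conditions on $\sum_i u_i^{2k}=1$ read $u_i\bigl[(\mathcal{A}x^{k-1})_i-\mu x_i^{k-1}\bigr]=0$, and this is vacuous when $u_i=0$. To recover $(\mathcal{A}x^{*k-1})_i=0$ at such coordinates you would need a second-order condition. Alternatively, use the direct perturbation $x^*+\varepsilon e_i$, renormalized: if $(\mathcal{A}x^{*k-1})_i>0$, it raises the objective at order $\varepsilon$, while the constraint moves only at order $\varepsilon^k$. Your main line uses KKT with LICQ, so this does not affect the proof.
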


For an $n$-vertex graph $G$ and an integer $t~ (2\le t\le\omega(G))$, the \textit{$t$-clique tensor} $\mathcal{A}(G)=(a_{i_1i_2\cdots i_t})$ is a $t$-order $n$-dimensional tensor, with entries \cite{liujoco}
$$a_{i_1i_2\cdots i_t}=\left\{\begin{array}{cl}
			\frac{1}{(t-1)!},&  \textnormal{if}\ \{i_1,i_2,\ldots,i_t\}\in C_{t}(G),\\
			0,& \textnormal{otherwise}.
		\end{array}\right.$$
Specifically, the $2$-clique tensor is the adjacency matrix of $G$. The spectral radius of $\mathcal{A}(G)$ is called the \textit{$t$-clique spectral radius} of $G$, denoted by $\rho_t(G)$.
It is proved that $|C_{t}(G)|\le \frac{n}{t}\rho_{t}(G)$ and equality holds if the number of $t$-cliques containing each vertex in $V(G)$ is equal \cite{liujoco}.
For $v\in V(G)$, let $c_t(v)$ be the number of $t$-cliques that contain the vertex $i$ in $G$.
Next, we describe the necessary and sufficient conditions for the equality to hold.

\begin{lem}\label{ct&rhot}
Let $G$ be an $n$-vertex graph with clique number $\omega$ and let $2\le t\le\omega$. Then
$$|C_{t}(G)|\le \frac{n}{t}\rho_{t}(G).$$
Equality holds if and only if the number of $t$-cliques containing each vertex in $V(G)$ is equal.
\end{lem}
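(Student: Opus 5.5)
The inequality comes from testing the uniform vector in the variational characterization of Lemma \ref{qi2013}. The $t$-clique tensor $\mathcal{A}(G)$ is symmetric and nonnegative, so Lemma \ref{qi2013} applies. Take $x=n^{-1/t}(1,\ldots,1)^{\mathsf{T}}$, which satisfies $\sum_i x_i^t=1$. Each $t$-clique $\{i_1,\ldots,i_t\}$ appears as $t!$ ordered index tuples, each with entry $\frac{1}{(t-1)!}$. Hence
$$\sum_{i_1,\ldots,i_t}a_{i_1\cdots i_t}x_{i_1}\cdots x_{i_t}=\frac{t!}{(t-1)!}\cdot\frac{|C_t(G)|}{n}=\frac{t|C_t(G)|}{n},$$
which gives $\rho_t(G)\ge t|C_t(G)|/n$, i.e. $|C_t(G)|\le\frac{n}{t}\rho_t(G)$.

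The same computation gives the useful identity $(\mathcal{A}x^{t-1})_i=\frac{(t-1)!}{(t-1)!}\,c_t(i)\,n^{-(t-1)/t}=c_t(i)\,n^{-(t-1)/t}$. Each $t$-clique containing $i$ contributes $(t-1)!$ orderings of the remaining indices, each weighted $\frac{1}{(t-1)!}$.

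\emph{Sufficiency of the equality condition.} Suppose $c_t(v)=c$ for all $v$. Then $\mathcal{A}\mathbf{1}^{t-1}=c\,\mathbf{1}=c\,\mathbf{1}^{[t-1]}$, so $c$ is an eigenvalue with a positive eigenvector. For nonnegative tensors an eigenvalue with a positive eigenvector equals the spectral radius; this is the Perron--Frobenius theory of Chang--Pearson--Zhang and Yang--Yang, and it holds without irreducibility. Alternatively, row sums all equal $c$ bound $\rho$ above and below. So $\rho_t(G)=c$. Double counting gives $nc=t|C_t(G)|$, hence equality.

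\emph{Necessity.} This is the main step. Suppose equality holds. Then the uniform $x$ attains the maximum in Lemma \ref{qi2013}. At the point $x$ all coordinates are positive, so $x$ is an interior critical point of the polynomial $F(x)=\mathcal{A}x^t$ on the sphere $\sum x_i^t=1$. The Lagrange conditions give $t(\mathcal{A}x^{t-1})_i=\mu\, t x_i^{t-1}$ for every $i$, with $\mu=\rho_t(G)$ by Euler's identity. Substituting the identity above, $c_t(i)n^{-(t-1)/t}=\rho_t(G)\,n^{-(t-1)/t}$, so $c_t(i)=\rho_t(G)$ is the same for every vertex.

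The only delicate point is justifying the Lagrange condition. The maximizer lies in the relative interior of the constraint set $\{x\in\mathbb{R}^n_+:\sum x_i^t=1\}$, which is a smooth manifold there. The gradient of the constraint is nonzero, so the first-order condition is valid. If one prefers to avoid calculus, a perturbation argument works instead. Replace $x$ by $x+\varepsilon y$ with $\sum_i y_i=0$, renormalize, and compare first-order terms. This forces $\sum_i c_t(i)y_i=0$ for all such $y$, so $c_t$ is constant.
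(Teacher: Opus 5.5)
Your proof is correct and follows the paper's route: testing the uniform vector in Lemma \ref{qi2013} gives the inequality, and equality forces $\mathbf{1}$ to be an eigenvector for $\rho_t(G)$, so $c_t(i)=\rho_t(G)$ for every vertex $i$. You also supply two things the paper leaves out: a Lagrange-multiplier (or perturbation) justification for the step that the maximizer is an eigenvector, which the paper only asserts, and a self-contained sufficiency argument via the row-sum bounds, where the paper simply cites \cite{liujoco}.
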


\begin{proof}
Without loss of generality, let $V(G)=[n]$.
Let $\mathcal{A}(G)=(a_{i_1 i_2 \cdots i_t})$ be the $t$-clique tensor of $G$.
For $i\in[n]$, let $c_t(i)$ denote the number of $t$-cliques contain the vertex $i$ in $G$.
Then $$\sum_{i_2,\ldots,i_t=1}^{n}a_{i i_2 \cdots i_t}=c_t(i),~ i\in[n].$$
Let $x\in\mathbb{R}^{n}_{+}$ be a vector with entries $x_i=n^{-\frac{1}{t}}~ (i\in[n])$. From Lemma \ref{qi2013}, we know that
\begin{equation}\label{rhot}
\rho_{t}(G)\ge\sum_{i_1,i_2,\ldots,i_t=1}^{n}a_{i_1 i_2 \cdots i_t}x_{i_1}x_{i_2}\cdots x_{i_t}=\frac{t\cdot|C_t(G)|}{n}.
\end{equation}

If $|C_{t}(G)|= \frac{n}{t}\rho_{t}(G)$, then the equality holds in Eq.(\ref{rhot}). It follows that the all-one vector $\mathbf{1}\in\mathbb{R}^{n}$ is the eigenvector of $\mathcal{A}(G)$ associated with $\rho_t(G)$, i.e., $\mathcal{A}(G)\mathbf{1}^{t-1}=\rho_{t}(G)\mathbf{1}^{[t-1]}$. So, we have $$\rho_{t}(G)=\left( \mathcal{A}(G)\mathbf{1}^{t-1} \right)_i=\sum_{i_2,\ldots,i_t=1}^{n}a_{i i_2 \cdots i_t}=c_t(i),~ i\in[n],$$ which implies that the number of $t$-cliques containing each vertex in $V(G)$ is equal.

If the number of $t$-cliques containing each vertex in $V(G)$ is equal, then $\rho_{t}(G)=\frac{t\cdot|C_t(G)|}{n}$ \cite{liujoco}, completing the proof.
\end{proof}

And the spectral radius of the $t$-clique tensor of a complete $t$-partite graph was also obtained in \cite{liujoco}.

\begin{lem}\cite{liujoco}\label{111}
   Let $G$ be a complete $t$-partite graph with partition $V_1,V_2,\ldots,V_{t}$. Then
   $$\rho_{t}(G)=\left( \prod_{i=1}^{t}|V_i| \right)^{\frac{t-1}{t}}.$$ 
\end{lem}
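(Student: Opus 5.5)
The statement is Lemma \ref{111}: for a complete $t$-partite graph with parts $V_1,\ldots,V_t$, $\rho_t(G)=\left(\prod_{i=1}^{t}|V_i|\right)^{\frac{t-1}{t}}$. I will prove the two inequalities separately, using the variational characterization of Lemma \ref{qi2013}.

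The starting point is that every $t$-clique of $G$ contains exactly one vertex from each part, since a $t$-partite graph has no larger cliques and no two vertices of one part are adjacent. Conversely, every such transversal is a clique. Each $t$-clique contributes $t!$ ordered index tuples, each with entry $\frac{1}{(t-1)!}$. Hence, for $x\in\mathbb{R}^n_+$,
$$\sum a_{i_1\cdots i_t}x_{i_1}\cdots x_{i_t}=t\prod_{j=1}^{t}S_j,\qquad S_j=\sum_{v\in V_j}x_v .$$

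For the upper bound, let $n_j=|V_j|$ and $P_j=\sum_{v\in V_j}x_v^t$, so that $\sum_j P_j=1$. By the power mean (H\"older) inequality, $S_j\le n_j^{\frac{t-1}{t}}P_j^{\frac1t}$. This gives
$$t\prod_j S_j\le t\left(\prod_j n_j\right)^{\frac{t-1}{t}}\left(\prod_j P_j\right)^{\frac1t}.$$
By AM--GM, $\prod_j P_j\le t^{-t}$, so $\left(\prod_j P_j\right)^{\frac1t}\le \frac1t$. The factor $t$ cancels, and the right-hand side is at most $\left(\prod_j n_j\right)^{\frac{t-1}{t}}$.

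For the lower bound, take $x_v=(t\,n_j)^{-1/t}$ for $v\in V_j$. Then $\sum_v x_v^t=1$ and $S_j=n_j^{\frac{t-1}{t}}t^{-\frac1t}$. Plugging in gives $t\prod_j S_j=\left(\prod_j n_j\right)^{\frac{t-1}{t}}$, so the bound is attained and equality holds.

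The only step needing care is the normalization: checking that the $t!$ orderings times $\frac{1}{(t-1)!}$ give the factor $t$, and that this factor exactly cancels the $t^{-1}$ produced by AM--GM. Everything else is a direct application of H\"older's inequality and AM--GM.
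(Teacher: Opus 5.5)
Your proof is correct. The transversal description of $C_t(G)$ turns the form into $t\prod_j S_j$. H\"older within each part and AM--GM across parts then bound it by $\left(\prod_j |V_j|\right)^{\frac{t-1}{t}}$ for every feasible $x$, and your explicit vector attains this, so Lemma~\ref{qi2013} gives $\rho_t(G)$ exactly.

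The paper supplies no argument of its own here; it imports the lemma from \cite{liujoco}. The relevant comparison is therefore with the other natural route, via the eigen-equation. Take $x_v=a_j>0$ for $v\in V_j$ and note that $(\mathcal{A}(G)x^{t-1})_v=\prod_{k\neq j}|V_k|a_k$, so $\prod_{k\ne j}|V_k|a_k=\lambda a_j^{t-1}$. Multiplying by $|V_j|a_j$ shows that $|V_j|a_j^t$ is independent of $j$, hence equals $\frac1t$ under $\sum_v x_v^t=1$, and then $\lambda=\left(\prod_j|V_j|\right)^{\frac{t-1}{t}}$. On that route one still has to invoke Perron--Frobenius (Collatz--Wielandt) theory for nonnegative tensors to know that this positive eigenpair realizes the spectral radius.

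Your variational argument avoids that step and gives a little more. Equality in H\"older forces $x$ to be constant on each part, and equality in AM--GM forces $\sum_{v\in V_j}x_v^t=\frac1t$, so the maximizer is unique. Adding isolated vertices also changes nothing: they do not appear in the form, and the weaker condition $\sum_j P_j\le 1$ still gives the same bound. That covers the setting in which the lemma is actually invoked in the equality case of Theorem~\ref{3}.

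A minor wording point: a $t$-clique meets each part exactly once because each part is an independent set, so its $t$ vertices lie in $t$ distinct parts. The remark that there are no larger cliques is not needed.
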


For a vector $x=(x_1,x_2,\ldots,x_n)^{\mathsf{T}}\in\mathbb{R}^{n}$ and a set $I\subseteq[n]$, denote the product $x_I=\prod_{i\in I}x_i$.
Given a graph $G$, for  a $t$-clique $I\in C_t(G)$, let $\alpha(I)$ be the order of the largest clique in $G$ containing $I$, where $2\le t\le \omega(G)$.
For two integers $s$ and $q$ with $1\le s\le q$, define the following homogeneous polynomials $$h_{s,G}(x)=\sum_{J\in C_s(G)}x_J$$ and $$f_{s,q,G}(x)=\sum_{I\in C_q(G)}\binom{\alpha(I)}{s}^{\frac{q}{s}}\binom{\alpha(I)}{q}^{-1}x_I.$$

\begin{lem}\cite{macl}\label{macl}
For every $x\in\mathbb{R}^{n}_{+}$, then $$f_{s,q,G}(x)\le h_{s,G}(x)^{\frac{q}{s}}.$$
Moreover, equality holds for $x\in\mathbb{R}^{n}_{++}$ only when the subgraph of $G$ induced on the set of vertices that belong to an $s$-clique is a complete $l$-partite graph with parts $V_1,\ldots,V_l$, for some $l\ge q$, and $\sum_{v\in V_i}x_v=\sum_{u\in V_j}x_u$ for all $1\le i,j\le l$.
\end{lem}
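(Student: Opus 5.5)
The plan is to reduce, by a Zykov-type symmetrization, to the case where the support of $x$ induces a complete multipartite graph, and then to finish with Maclaurin's inequality. First I dispose of trivial reductions. Vertices with $x_v=0$ can be deleted: this leaves $h_{s,G}(x)$ unchanged and can only decrease the coefficients $\binom{\alpha(I)}{s}^{q/s}\binom{\alpha(I)}{q}^{-1}$, since this quantity is nondecreasing in $\alpha(I)$ for $\alpha(I)\ge q\ge s$. Vertices lying in no $s$-clique contribute to neither side, because every $q$-clique with $q\ge s$ contains an $s$-clique. So I may assume $x\in\mathbb{R}^n_{++}$ and that every vertex lies in an $s$-clique.

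\textbf{Base case (complete multipartite).} Let $G$ be complete $l$-partite with parts $V_1,\ldots,V_l$, and put $y_i=\sum_{v\in V_i}x_v$. Then every $q$-clique has $\alpha(I)=l$. Moreover $h_{s,G}(x)=e_s(y)$ and $f_{s,q,G}(x)=\binom{l}{s}^{q/s}\binom{l}{q}^{-1}e_q(y)$, where $e_j$ denotes the $j$-th elementary symmetric polynomial. The inequality then reads
$$\frac{e_q(y)}{\binom{l}{q}}\le\left(\frac{e_s(y)}{\binom{l}{s}}\right)^{q/s},$$
which is exactly Maclaurin's inequality. Its equality case, all $y_i$ equal, gives the stated condition $\sum_{v\in V_i}x_v=\sum_{u\in V_j}x_u$.

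\textbf{Reduction step.} I argue by induction on the number of non-adjacent pairs $u,v$ that are not ``twins'' (i.e.\ $N(u)\neq N(v)$). For such a pair, each of $f$ and $h$ is affine in $x_u$ and in $x_v$ separately, since no clique contains both. Write $f=F_0+x_uF_u+x_vF_v$ and $h=H_0+x_uH_u+x_vH_v$. Since $H_u$ and $H_v$ are positive, I can exchange the roles: replace $v$ by a clone of $u$, or $u$ by a clone of $v$, chosen according to which of $F_u/H_u$ and $F_v/H_v$ is larger. The intent is to obtain a graph $G'$ with $h_{s,G'}(x)\ge h_{s,G}(x)$ and $f_{s,q,G'}(x)\ge f_{s,q,G}(x)$, or at least with the difference $h^{q/s}-f$ not increased. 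Iterating makes non-adjacency an equivalence relation, so the graph becomes complete multipartite, where the base case applies.

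\textbf{Main obstacle.} The difficulty is that $f$ is weighted by the local quantity $\alpha(I)$, and cloning can \emph{lower} $\alpha$ for cliques not through $u$ or $v$. Such cliques might lose their largest extension, which used $v$, so the coefficients change non-monotonically. My first attempt to handle this is to exploit that $\alpha$ is only needed as an upper bound. Since the coefficient is increasing in $\alpha$, it suffices to prove the stronger inequality in which $\alpha(I)$ is replaced by any ``admissible'' weight $\beta(I)\le\alpha(I)$ (e.g.\ $\beta(I)=$ size of some clique containing $I$, chosen via a fixed maximal clique assignment). I would then carry these assigned cliques along under cloning, so that the weights transform consistently.

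If that fails, the fallback is to prove the inequality clique-by-clique. Assign to each $q$-clique $I$ a maximum clique $K\supseteq I$ and apply Maclaurin inside each $K$. The remaining issue is to control the overcounting of $s$-cliques shared among several maximal cliques, via the concavity of $t\mapsto t^{s/q}$.

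Finally, the equality characterization would be traced back through the strict steps: strict Maclaurin, and strictness of symmetrization whenever the support is not complete multipartite.
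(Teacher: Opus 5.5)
\textbf{There is a genuine gap: the reduction step is never carried out, and two of its ingredients are stated in the wrong direction.} The paper does not prove this lemma; it quotes it from \cite{macl}. So your proposal has to stand on its own.

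Your base case is fine: for a complete multipartite graph the claim reduces to Maclaurin via part sums. The decisive step, however, is symmetrization in the presence of the local weights, and you do not carry it out. You state an intent, flag an obstacle, and offer two speculative repairs.

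The obstacle itself is misdiagnosed, because your monotonicity claim is backwards. The coefficient $c(a)=\binom{a}{s}^{q/s}\binom{a}{q}^{-1}$ is \emph{nonincreasing} in $a$. Maclaurin's inequality applied to $(1,\ldots,1,0)\in\mathbb{R}^{a+1}$ gives exactly $c(a+1)\le c(a)$; for $s=1$, $q=2$ one has $c(a)=2a/(a-1)$. This has three consequences:
\begin{enumerate}
\item Your justification for deleting zero-weight vertices is inverted. If the coefficients really dropped, then $f_{s,q,G-v}(x)\le f_{s,q,G}(x)$, and the inequality for $G-v$ would say nothing about $G$. The correct reason is that deleting $v$ can only lower $\alpha(I)$ and hence can only raise coefficients, so $f_{s,q,G}(x)\le f_{s,q,G-v}(x)$ when $x_v=0$.
\item The same holds for cloning. $\alpha$ is unchanged on cliques through the surviving vertex and on their copies through the clone. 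It can only drop on cliques avoiding both, which moves $f$ in the favourable direction. So your ``main obstacle'' is not an obstacle, and the admissible-weight bookkeeping is unnecessary.
\item The clique-by-clique fallback would fail anyway. When maximum cliques overlap, $\sum_K h_{s,K}(x)$ can far exceed $h_{s,G}(x)$, and concavity cannot recover that loss.
\end{enumerate}

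The second problem is the normalization. Asking for $h_{s,G'}(x)\ge h_{s,G}(x)$ together with $f_{s,q,G'}(x)\ge f_{s,q,G}(x)$ does not transfer the inequality, since increasing $h$ works against you. With $x$ held fixed, the ratio test $F_u/H_u$ versus $F_v/H_v$ determines the sign of neither $F_u-F_v$ nor $H_u-H_v$. Moreover, the original pair $(f,h)$ is essentially the convex combination of the two cloned pairs. Because $z\mapsto z^{q/s}$ is convex, the inequality for both cloned pairs does not yield it for the original. Only for $s=1$ is $h=\|x\|_1$ left unchanged, which is why Motzkin--Straus-style cloning works there.

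The fix is to keep $h$ fixed and to delete rather than clone. Take non-adjacent $u,v$.
\begin{enumerate}
\item If $H_v=0$, then $F_v=0$, since every $q$-clique through $v$ contains an $s$-clique through $v$. So $x_v$ can simply be set to $0$.
\item Otherwise $f$ is linear on the segment $x_uH_u+x_vH_v=\mathrm{const}$. If $F_u/H_u\ge F_v/H_v$, the vector $x'$ with $x'_u=x_u+x_vH_v/H_u$ and $x'_v=0$ satisfies $h_{s,G}(x')=h_{s,G}(x)$ and $f_{s,q,G}(x')\ge f_{s,q,G}(x)$.
\end{enumerate}
Inducting on $|V(G)|$ then gives
$$f_{s,q,G}(x)\le f_{s,q,G}(x')\le f_{s,q,G-v}(x')\le h_{s,G-v}(x')^{q/s}=h_{s,G}(x)^{q/s}.$$
The induction bottoms out at a complete graph, where the claim is exactly Maclaurin's inequality. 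This also removes the need for your unproved claim that iterated cloning terminates in a complete multipartite graph.

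Finally, the equality part is only gestured at. The steps are not strict when $F_u/H_u=F_v/H_v$. A real argument must track these ties and show that no deletion changes any $\alpha(I)$ with $x_I>0$.
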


For a vector $x\in\mathbb{R}^{n}$, the \textit{support} of $x$, denoted by $\mathrm{supp}(x)$, is the set of all indices corresponding to nonzero entries in $x$.
When $s=1$ and $x=(x_1,\ldots,x_n)^{\mathsf{T}}\in\mathbb{R}^{n}_{+}$ is a vector with $||x||_1=x_1+\cdots+x_n=1$, from Lemma \ref{macl}, we can get the following conclusion directly.

\begin{lem}\label{w1}
Let $G$ be an $n$-vertex graph with clique number $\omega$ and let $2\le t\le\omega$. For any vector $x\in\mathbb{R}^{n}_{+}$ with $||x||_{1}=1$,
\begin{equation*}\label{whms}
\sum_{I\in C_t(G)}\left(\alpha(I)\right)^t\binom{\alpha(I)}{t}^{-1}x_I\le1.
\end{equation*}
Equality holds if and only if the induced subgraph of $G$ on $\mathrm{supp}(x)$ is a complete $\omega$-partite graph with partition $V_1, V_2, \ldots, V_{\omega}$ satisfying $\sum_{v\in V_i}x_v=\frac{1}{\omega}$ for all $i\in[\omega]$.
\end{lem}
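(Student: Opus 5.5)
The plan is to specialize Lemma~\ref{macl} to $s=1$, $q=t$, and then handle the equality case separately, since Lemma~\ref{macl} only describes equality for strictly positive vectors.

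\emph{The inequality.} With $s=1$ we have $h_{1,G}(x)=\sum_{v}x_v=\|x\|_1=1$ and $\binom{\alpha(I)}{1}^{t}=\alpha(I)^t$. Hence $f_{1,t,G}(x)$ is exactly the left-hand side of the claimed inequality, and Lemma~\ref{macl} gives $f_{1,t,G}(x)\le 1^{t}=1$ at once.

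\emph{Equality.} Set $S=\mathrm{supp}(x)$, $H=G[S]$, and $g(a)=a^t\binom{a}{t}^{-1}$. Only $t$-cliques inside $S$ contribute, and for such $I$ we have $\alpha_G(I)\ge\alpha_H(I)$. Next, $g$ is decreasing in $a\ge t$, since
\[
g(a)=\frac{t!}{\prod_{j=0}^{t-1}(1-j/a)}.
\]
Therefore $f_{1,t,G}(x)\le f_{1,t,H}(x|_S)$, and the latter is at most $1$ by Lemma~\ref{macl} applied to $H$ with the positive vector $x|_S$. So equality forces two things:
\begin{itemize}
\item equality in Lemma~\ref{macl} for $H$, which makes $H$ a complete $l$-partite graph, $l\ge t$, with all part sums equal to $1/l$;
\item $\alpha_G(I)=\alpha_H(I)=l$ for every $t$-clique $I$ of $H$.
\end{itemize}
For the converse, take $H$ complete $l$-partite with part sums $1/l$. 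The left-hand side equals $g(l)\,e_t(1/l,\dots,1/l)=g(l)\binom{l}{t}l^{-t}=1$, where $e_t$ is the $t$-th elementary symmetric polynomial of the part sums.

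\emph{Main obstacle.} The remaining step is to identify $l$ with $\omega$, and I expect this to fail in general. For $G=K_3\cup K_2$ with $t=2$ and $x$ uniform on the $K_2$, the left-hand side is $4\cdot\frac14=1$, yet $\mathrm{supp}(x)$ spans a complete $2$-partite graph, not an $\omega=3$-partite one. So the characterization should read as follows. $G[\mathrm{supp}(x)]$ is complete $l$-partite with each part sum $1/l$, and no $t$-clique inside it extends to a larger clique of $G$. This coincides with the stated version when $l=\omega$, for example when $\omega(G[\mathrm{supp}(x)])=\omega$. I would present that corrected form, noting that for the applications (maximizers of $\rho_t$) one can presumably arrange $l=\omega$.
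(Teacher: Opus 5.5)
Your proof of the inequality is the paper's proof. The paper obtains Lemma~\ref{w1} in one line by setting $s=1$, $q=t$ in Lemma~\ref{macl}, and asserts that the equality clause also follows ``directly''. Your objection to that clause is correct, and it exposes a flaw in the statement, not in your argument. Lemma~\ref{macl} describes equality only for $x\in\mathbb{R}^n_{++}$. Once $x$ has zeros you must pass to $H=G[\mathrm{supp}(x)]$, and there $\alpha_H(I)$ can be smaller than $\alpha_G(I)$; the paper never addresses this.

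Your example $G=K_3\cup K_2$, $t=2$, $x$ uniform on the $K_2$, does give equality. Yet $K_2$ has no partition into $\omega=3$ classes each of weight $\frac13$, so the ``only if'' direction is false as written. The ``if'' direction is fine, since $\alpha(I)\le\omega$ forces $\alpha(I)=\omega$ there.

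Your reduction $f_{1,t,G}(x)\le f_{1,t,H}(x|_S)\le 1$ gives the right characterization. It uses $\alpha_G(I)\ge\alpha_H(I)$, monotonicity of $g$, and Lemma~\ref{macl} applied to $H$ with a positive vector. Two small points:
\begin{itemize}
\item To conclude $\alpha_G(I)=\alpha_H(I)$ you need $g$ \emph{strictly} decreasing. Your product formula shows this, because the factor $1-1/a$ is strictly increasing once $t\ge 2$.
\item In your final corrected form, keep the condition $l\ge t$ from your bullet list. For $l<t$ your clique condition is vacuous while the left-hand side is $0$.
\end{itemize}

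Your expectation about the applications is also right. In the equality analysis of Theorem~\ref{3}, the H\"older step forces $x_I>0$ for every $I\in C_t(G)$; the paper uses exactly this fact. Hence the support of $(x_1^t,\ldots,x_n^t)$ contains every $\omega$-clique, so $l=\omega$, and your corrected lemma yields the conclusion the paper draws there. The lemma itself, however, should carry your extra condition, or the hypothesis $\omega(G[\mathrm{supp}(x)])=\omega$.
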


For a graph $G$ and a vertex $v\in V(G)$, let $\alpha(v)$ denote the order of the largest clique containing $v$ in $G$.
For a $t$-clique $I=\{i_1,i_2,\ldots,i_t\}$ in $G$, it is clear that $\alpha(I)\le\min\{\alpha(i_1),\alpha(i_2),\ldots,\alpha(i_t)\}$. Hence, we have the following conclusion.

\begin{lem}\label{w2}
Let $G$ be an $n$-vertex graph with clique number $\omega$ and let $2\le t\le\omega$. For any vector $x\in\mathbb{R}^{n}_{+}$ with $||x||_1=1$,
$$\sum_{I=\{i_1,i_2,\ldots,i_t\}\in C_t(G)} \frac{1}{t} \left(\sum_{j=1}^{t}\left(\alpha(i_j)\right)^t\binom{\alpha(i_j)}{t}^{-1}\right)x_I\le 1.$$
Equality holds if and only if the induced subgraph of $G$ on $\mathrm{supp}(x)$ is a complete $\omega$-partite graph with partition $V_1, V_2, \ldots, V_{\omega}$ satisfy $\sum_{v\in V_i}x_v=\frac{1}{\omega}$ for all $i\in[\omega]$.
\end{lem}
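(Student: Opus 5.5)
The plan is to deduce Lemma \ref{w2} from Lemma \ref{w1} by comparing the two sums term by term. Set
$$g(a)=a^t\binom{a}{t}^{-1}=\frac{t!\,a^t}{a(a-1)\cdots(a-t+1)}=\frac{t!}{\prod_{j=0}^{t-1}\left(1-\frac{j}{a}\right)},\qquad a\ge t.$$
For $t\ge2$ the factor with $j=1$ makes the denominator strictly increasing in $a$, so $g$ is strictly decreasing on integers $a\ge t$.

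Fix a $t$-clique $I=\{i_1,\ldots,i_t\}$. Any clique containing $I$ contains each $i_j$, so $\alpha(i_j)\ge\alpha(I)\ge t$ for every $j$. By monotonicity, $g(\alpha(i_j))\le g(\alpha(I))$, and therefore
$$\frac1t\sum_{j=1}^t g(\alpha(i_j))\le g(\alpha(I)).$$
Multiplying by $x_I\ge0$ and summing over $C_t(G)$ bounds the left side of Lemma \ref{w2} by the left side of Lemma \ref{w1}, which is at most $1$. This proves the inequality.

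For the equality case, suppose equality holds in Lemma \ref{w2}. Then both inequalities in the chain are tight, so in particular equality holds in Lemma \ref{w1}. Lemma \ref{w1} then gives that $G[\mathrm{supp}(x)]$ is complete $\omega$-partite with $\sum_{v\in V_i}x_v=1/\omega$ for all $i$.

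Conversely, assume that structure. Equality in Lemma \ref{w1} holds by that lemma, so it remains to make the first step tight. Only cliques with $x_I>0$ matter, that is, $I\subseteq\mathrm{supp}(x)$. Each part $V_i$ is nonempty, since its $x$-mass is $1/\omega>0$. Hence $I$, and likewise each vertex $i_j$, extends inside the complete $\omega$-partite graph $G[\mathrm{supp}(x)]$ to an $\omega$-clique. Since $\omega$ is the global clique number, $\alpha(I)=\alpha(i_j)=\omega$. The averaging step is then an equality for every term with $x_I>0$, and so equality holds in Lemma \ref{w2}.

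The only delicate point is the strict monotonicity of $g$ and the check that terms with $x_I=0$ do not affect the equality case; both are routine.
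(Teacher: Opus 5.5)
Your argument is the paper's argument written out in full. The paper justifies Lemma \ref{w2} only by the remark $\alpha(I)\le\min_j\alpha(i_j)$ together with Lemma \ref{w1}, and leaves implicit the monotonicity of $g(a)=a^t\binom{a}{t}^{-1}$ that you check. Your inequality and your ``if'' direction of the equality case are correct.

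The ``only if'' direction, however, is false when $t<\omega$. So the step ``equality in Lemma \ref{w1} gives that $G[\mathrm{supp}(x)]$ is complete $\omega$-partite'' cannot be right: you have inherited a faulty equality clause from Lemma \ref{w1}. Here is a counterexample.
Take $G=K_t\sqcup K_\omega$ with $t<\omega$, and set $x_v=1/t$ on $V(K_t)$ and $x_v=0$ elsewhere.
The only $t$-clique with $x_I>0$ is $V(K_t)$, and each of its vertices has $\alpha(v)=t$. Hence both left-hand sides equal $g(t)\,t^{-t}=1$.
But $G[\mathrm{supp}(x)]=K_t$ has fewer than $\omega$ vertices, so it admits no partition into $\omega$ parts of mass $1/\omega$ each.

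The source of the error is that Lemma \ref{macl} describes equality only for strictly positive vectors. After restricting to $S=\mathrm{supp}(x)$, the weights still involve $\alpha$ computed in $G$. What one actually gets is that $G[S]$ is complete $l$-partite for some $l\ge t$ (not necessarily $l=\omega$), with parts of mass $1/l$, and $\alpha(I)=l$ for every $t$-clique $I\subseteq S$.

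Your strict monotonicity of $g$ forces $\alpha(i_j)=\alpha(I)$ whenever $x_I>0$. Combining this with the corrected structure gives the correct statement. Equality in Lemma \ref{w2} holds if and only if $G[S]$ is complete $l$-partite for some $t\le l\le\omega$, each part has mass $1/l$, and $\alpha(v)=l$ for every $v\in S$.

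This agrees with the stated condition when $t=\omega$. It also still forces $l=\omega$ in the later application in Theorem \ref{le}, where the support contains a vertex with $\alpha(v)=\omega$.
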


\section{Main results}
In this section, we obtain some upper bounds on the $t$-clique spectral radius of a graph, which are expressed by the order of the largest clique containing each clique or vertex and can be viewed as the localized versions of the spectral Zykov's theorem.

\begin{thm}\label{3}
Let $G$ be a graph with $t$-clique spectral radius $\rho_{t}(G)$ and clique number $\omega$ and let $2\le t\le \omega$. Then
$$\left( \frac{\rho_{t}(G)}{t} \right)^{t}\le \left( \sum_{I\in C_t(G)}\sqrt[t-1]{\binom{\alpha(I)}{t}\left(\alpha(I)\right)^{-t}} \right)^{t-1}.$$
Equality holds if and only if the graph obtained from $G$ by deleting edges not contained in $t$-cliques is a complete $w$-partite graph for $\omega=t$, or a complete regular $\omega$-partite graph for $\omega\ge t+1$ (possibly with some isolated vertices).
\end{thm}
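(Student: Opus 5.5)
Let $x\in\mathbb{R}^n_{+}$ be a nonnegative eigenvector of $\mathcal{A}(G)$ for $\rho_t(G)$ with $\sum_i x_i^t=1$. By Lemma \ref{qi2013},
$$\rho_t(G)=\sum_{i_1,\ldots,i_t}a_{i_1\cdots i_t}x_{i_1}\cdots x_{i_t}=t\sum_{I\in C_t(G)}x_I,$$
since each $t$-clique appears $t!$ times with weight $1/(t-1)!$. So $\rho_t(G)/t=\sum_{I\in C_t(G)}x_I$, and the goal is to bound this sum by combining Hölder's inequality with Lemma \ref{w1} applied to the vector $y=(x_1^t,\ldots,x_n^t)^{\mathsf T}$, which satisfies $\|y\|_1=1$. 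For each $I\in C_t(G)$ write $w_I=(\alpha(I))^t\binom{\alpha(I)}{t}^{-1}$. Lemma \ref{w1} gives $\sum_I w_I\, y_I=\sum_I w_I x_I^t\le 1$.

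Next factor $x_I=(w_I x_I^t)^{1/t}\cdot w_I^{-1/t}$ and apply Hölder with exponents $t$ and $t/(t-1)$:
$$\sum_I x_I\le\Big(\sum_I w_I x_I^t\Big)^{1/t}\Big(\sum_I w_I^{-1/(t-1)}\Big)^{(t-1)/t}\le\Big(\sum_I \sqrt[t-1]{\binom{\alpha(I)}{t}\alpha(I)^{-t}}\Big)^{(t-1)/t}.$$
Raising to the $t$-th power gives the stated inequality. This part is routine.

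The equality characterization is the main work. Equality requires equality in Lemma \ref{w1}: the subgraph induced on $\mathrm{supp}(x)$ is complete $\omega$-partite with each part carrying $y$-mass $1/\omega$. It also requires equality in Hölder: $w_I x_I^t$ must be proportional to $w_I^{-1/(t-1)}$ over all $I\in C_t(G)$. In particular every $t$-clique of $G$ must lie inside $\mathrm{supp}(x)$, because a $t$-clique with $x_I=0$ would violate proportionality. Within the support all $\alpha(I)=\omega$, so $x_I$ is constant over the $t$-cliques. Since vertices outside the support lie in no $t$-clique, they become isolated once the edges not in $t$-cliques are deleted.

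The delicate step is deducing regularity. If $\omega=t$, then $x_I$ constant on transversals of the $t$ parts forces $x$ to be constant on each part, which is consistent with any part sizes (Lemma \ref{111}), so no regularity is forced. If $\omega\ge t+1$, swapping one vertex between two transversal $t$-sets that use different parts shows that all parts have equal $x$-values. Hence $x$ is constant on the support, and the mass condition $|V_i|x^t=1/\omega$ forces $|V_i|$ to be equal. For the converse, direct evaluation using Lemma \ref{111}, or Lemma \ref{ct&rhot} with vertex-transitivity in the regular case, yields equality. The main thing to check carefully is that edges not in $t$-cliques are harmless, since they do not affect $\mathcal{A}(G)$.
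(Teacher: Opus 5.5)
Your proposal is correct and is essentially the paper's proof: the same H\"{o}lder split of $x_I$ against the weights $\alpha(I)^t\binom{\alpha(I)}{t}^{-1}$, Lemma \ref{w1} applied to $(x_1^t,\ldots,x_n^t)^{\mathsf{T}}$, the same transversal-swapping argument forcing regularity when $\omega\ge t+1$, and Lemma \ref{111} for the converse when $\omega=t$. The differences are minor: you derive from H\"{o}lder's proportionality condition that every $t$-clique lies in $\mathrm{supp}(x)$, where the paper only asserts positivity there, and for the regular converse you use the equality case of $|C_t(G)|\le\frac{n}{t}\rho_t(G)$ instead of Theorem \ref{liu2026dm}.
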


\begin{proof}
For the $t$-clique tensor $\mathcal{A}(G)$ of the graph $G$, let $x=(x_1,\ldots,x_n)\in\mathbb{R}^{n}_{+}$ be an nonnegative eigenvector corresponding to $\rho_t(G)$ with $x_1^t+\cdots+x_n^t=1$. Then $$\rho_{t}(G)=\mathcal{A}(G)x^t=t\sum_{\{i_1,i_2,\ldots,i_t\}\in C_t(G)}x_{i_1} x_{i_2} \cdots x_{i_t}.$$

H\H{o}lder's inequality shows that for two nonnegative vectors $x=(x_1,\ldots,x_n)^{\mathsf{T}}$ and $y=(y_1,\ldots,y_n)^{\mathsf{T}}$, if two positive number $p$ and $q$ satisfy $\frac{1}{p}+\frac{1}{q}=1$, then $\sum_{i=1}^{n}x_i y_i\le \left(\sum_{i=1}^{n}x_i^p\right)^{\frac{1}{p}}\left(\sum_{i=1}^{n}y_i^q\right)^{\frac{1}{q}}$,
the equality holds if and only if $x$ and $y$ are proportional.
Thus, we have
\begin{align*}
\rho_{t}(G)&=t \sum_{\{i_1,i_2,\ldots,i_t\}=I\in C_t(G)} \left(\frac{\binom{\alpha(I)}{t}}{\left(\alpha(I)\right)^t}\right)^{\frac{1}{t}} \left(\frac{\left(\alpha(I)\right)^t}{\binom{\alpha(I)}{t}}\right)^{\frac{1}{t}}x_{i_1} x_{i_2} \cdots x_{i_t}\\
&\le t \left(\sum_{I\in C_t(G)}\left(\frac{\binom{\alpha(I)}{t}}{\left(\alpha(I)\right)^t}\right)^{\frac{1}{t-1}}\right)^{\frac{t-1}{t}} \left(\sum_{\{i_1,i_2,\ldots,i_t\}=I\in C_t(G)}\frac{\left(\alpha(I)\right)^t}{\binom{\alpha(I)}{t}}x_{i_1}^{t} x_{i_2}^{t}\cdots x_{i_t}^{t}\right)^{\frac{1}{t}}.
\end{align*}
Since $x_1^t+x_2^t+\cdots+x_n^t=1$, by Lemma \ref{w1}, we have
\begin{equation}\label{rho}
\rho_{t}(G)\le t \left(\sum_{I\in C_t(G)}\left(\frac{\binom{\alpha(I)}{t}}{\left(\alpha(I)\right)^t}\right)^{\frac{1}{t-1}}\right)^{\frac{t-1}{t}}.
\end{equation}

When $\omega = t$, if the graph $G'$ obtained from $G$ by deleting edges not contained in $t$-cliques is a complete $t$-partite graph, we have $\alpha(I)=t$ for every $I\in C_{t}(G)$. Then the Inequality (\ref{rho}) can be simplified to $$\rho_{t}(G)\le t \left(\sum_{I\in C_t(G)}\left(\frac{\binom{\alpha(I)}{t}}{\left(\alpha(I)\right)^t}\right)^{\frac{1}{t-1}}\right)^{\frac{t-1}{t}}=|C_t(G)|^{\frac{t-1}{t}}.$$
By Lemma \ref{111}, $\rho_t(G)=\rho_t(G')=\left( \prod_{i=1}^{t}|V_i| \right)^{\frac{t-1}{t}}=|C_t(G)|^{\frac{t-1}{t}}$, the equality in Eq. (\ref{rho}) holds.

When $\omega \geq t+1$, if the graph $G'$ obtained from $G$ by deleting edges not contained in $t$-cliques is a complete regular $\omega$-partite graph, we have $\alpha(I)=w$ for every $I\in C_{t}(G)$. Then the Inequality (\ref{rho}) can be simplified to $$\rho_{t}(G)\le t \left(\sum_{I\in C_t(G)}\left(\frac{\binom{\alpha(I)}{t}}{\left(\alpha(I)\right)^t}\right)^{\frac{1}{t-1}}\right)^{\frac{t-1}{t}}
=\frac{t}{\omega}\binom{\omega}{t}^{\frac{1}{t}}|C_{t}(G)|^{\frac{t-1}{t}}.$$ By Theorem \ref{liu2026dm}, $\rho_t(G)=\rho_t(G')=\frac{t}{\omega}\binom{\omega}{t}^{\frac{1}{t}}|C_{t}(G)|^{\frac{t-1}{t}}$, the equality in Eq. (\ref{rho}) holds.

Next, we characterize all graphs attaining equality in Inequality (\ref{rho}).
According to the proof above, by H\H{o}lder's inequality, equality in Eq. (\ref{rho}) holds if and only if $$\sum_{\{i_1,i_2,\ldots,i_t\}=I\in C_t(G)}\frac{\left(\alpha(I)\right)^t}{\binom{\alpha(I)}{t}}x_{i_1}^{t} x_{i_2}^{t}\cdots x_{i_t}^{t}=1$$ and for each $t$-clique $I=\{i_1,i_2,\ldots,i_t\}\in C_{t}(G)$, $$x_{i_1}x_{i_2}\cdots x_{i_t}=c\left(\frac{\binom{\alpha(I)}{t}}{\left(\alpha(I)\right)^t}\right)^{\frac{2}{t}}$$ for some constant $c>0$.
By Lemma \ref{w1} and $x_i>0$ for any vertex $i$ contained in a $t$-clique of $G$, the equality is equivalent to the following:
\begin{enumerate}
    \item[(1)] The graph $G'$ obtained from $G$ by deleting edges not contained in $t$-cliques is a complete $\omega$-partite graph (possibly with some isolated vertices), and its vertex classes $V_1,V_2,\ldots,V_{\omega}$ satisfy $\sum_{v\in V_i}x_v^{t}=\frac{1}{\omega}$ for all $i\in[\omega]$.
    \item[(2)] And for each $t$-clique $\{i_1,i_2,\cdots,i_t\}\in C_t(G)$, $x_{i_1} x_{i_2}\cdots x_{i_t} = c'$ for some constant $c'>0$.
\end{enumerate}


From the above items, when $\omega = t$, the graph $G'$ obtained from $G$ by deleting edges not contained in $t$-cliques is a complete $\omega$-partite graph. When $\omega \geq t+1$, let $G'$ be the complete $\omega$-partite graph. For any $i \ne j$ and $u \in V_i$, $v \in V_j$, by item (2), we have $x_{u} x_{i_2}\cdots x_{i_t}= c' =x_{v} x_{i_2}\cdots x_{i_t}$ for any $i_2,\cdots,i_t\notin\left(V_i \cup V_j\right)$ and $i_2,\cdots,i_t$ respectively come from other $t-1$ different partitions. Then $x_u = x_v$, and therefore $|V_1| = |V_2| = \cdots = |V_\omega|$, i.e. $G'$ is a complete regular $\omega$-partite graph.
\end{proof}

\begin{remark}
When $t=2$, the conclusion in Theorem \ref{3} shows that $$\rho^2(G)\le2\sum_{e\in E(G)}\frac{\alpha(e)-1}{\alpha(e)},$$ which is the localized version of spectral Tur\'{a}n's theorem given by Liu and Ning \cite{liulelejctb} (i.e., Theorem \ref{liulelejctb}).
And $\alpha(I)\le \omega(G)$ for any $I\in C_t(G)$, then from Theorem \ref{3}, we have $$\left( \frac{\rho_t(G)}{t} \right)^{t}\le \frac{1}{\omega(G)^t}\binom{\omega(G)}{t}|C_t(G)|^{t-1},$$ which implies the inequality in Theorem \ref{liu2026dm}.
\end{remark}


For any $t$-clique $I=\{i_1,i_2,\ldots,i_t\}\in C_t(G)$, since $\alpha(I)\le \alpha(i_j)$ for $j=1,\ldots,t$, we can get the following conclusion.

\begin{cor}\label{vertex}
Let $G$ be a graph with $t$-clique spectral radius $\rho_{t}(G)$ and clique number $\omega$ and let $2\le t\le\omega$. Then
$$\left( \rho_{t}(G)\right)^{t}\le t \left( \sum_{v\in V(G)} c_{t}(v) \sqrt[t-1]{\binom{\alpha(v)}{t}\left(\alpha(v)\right)^{-t}} \right)^{t-1}.$$
Equality holds if and only if the graph obtained from $G$ by deleting edges not contained in $t$-cliques is a complete $\omega$-partite graph for $\omega=t$, or a complete regular $\omega$-partite graph for $\omega\ge t+1$ (possibly with some isolated vertices).
\end{cor}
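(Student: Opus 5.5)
The plan is to derive the corollary directly from Theorem \ref{3}. We compare $\alpha(I)$ with $\alpha(i_j)$ through the monotonicity of the weight function, and then regroup the sum over $t$-cliques as a sum over vertices. For an integer $a\ge t$ set
$$\varphi(a)=\binom{a}{t}a^{-t}=\frac{1}{t!}\prod_{j=0}^{t-1}\left(1-\frac{j}{a}\right).$$
Every factor $1-j/a$ is nonnegative and nondecreasing in $a$, so $\varphi$ is nondecreasing in $a$. Hence $\varphi(a)^{\frac{1}{t-1}}$ is nondecreasing too.

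\textbf{Key inequality.} Take a $t$-clique $I=\{i_1,\ldots,i_t\}\in C_t(G)$. Since $t\le\alpha(I)\le\alpha(i_j)$ for every $j$, monotonicity gives $\varphi(\alpha(I))^{\frac{1}{t-1}}\le\varphi(\alpha(i_j))^{\frac{1}{t-1}}$ for each $j$. Averaging over $j$,
$$\varphi(\alpha(I))^{\frac{1}{t-1}}\le\frac{1}{t}\sum_{j=1}^{t}\varphi(\alpha(i_j))^{\frac{1}{t-1}}.$$
Summing over $I\in C_t(G)$ and exchanging the order of summation, each vertex $v$ is counted once for each $t$-clique containing it, that is, $c_t(v)$ times. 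So
$$\sum_{I\in C_t(G)}\varphi(\alpha(I))^{\frac{1}{t-1}}\le\frac{1}{t}\sum_{v\in V(G)}c_t(v)\,\varphi(\alpha(v))^{\frac{1}{t-1}}.$$
Here $\varphi(\alpha(v))$ is only needed when $c_t(v)>0$, and then $\alpha(v)\ge t$.

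\textbf{Combining with Theorem \ref{3}.} Raising the last inequality to the power $t-1$ and inserting it into Theorem \ref{3} gives
$$\left(\frac{\rho_t(G)}{t}\right)^t\le t^{-(t-1)}\left(\sum_{v}c_t(v)\varphi(\alpha(v))^{\frac{1}{t-1}}\right)^{t-1}.$$
Multiplying by $t^t$ yields exactly the stated bound $\rho_t(G)^t\le t\left(\sum_v c_t(v)\varphi(\alpha(v))^{\frac{1}{t-1}}\right)^{t-1}$.

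\textbf{Equality case.} This is the only point needing care. Equality in the corollary requires equality in Theorem \ref{3}. By that theorem, the graph $G'$ obtained by deleting edges not in $t$-cliques is then complete $\omega$-partite for $\omega=t$, or complete regular $\omega$-partite for $\omega\ge t+1$. Conversely, suppose $G'$ has this form. Every vertex with $c_t(v)>0$ lies in a copy of $K_\omega$ inside $G'$, so $\alpha(v)\ge\omega$; since $\alpha(v)\le\omega$, we get $\alpha(v)=\omega$. Likewise $\alpha(I)=\omega$ for every $I\in C_t(G)$. Vertices with $c_t(v)=0$ contribute nothing to the sum. Thus the averaging step is an equality, and Theorem \ref{3} supplies equality in the remaining step. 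Therefore equality holds exactly for the graphs listed. I do not expect a real obstacle here; the only points to check are the monotonicity of $\varphi$ and the bookkeeping of vertices with $c_t(v)=0$.
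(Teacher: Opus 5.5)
Your proposal is correct and follows the same route as the paper. The paper also applies Theorem \ref{3}, uses $\alpha(I)\le\alpha(i_j)$ to bound each clique term by the average of the vertex terms, regroups the sum by $c_t(v)$, and inherits the equality case from Theorem \ref{3} (where $\alpha(I)=\alpha(v)=\omega$). Your explicit proof that $\binom{a}{t}a^{-t}$ is nondecreasing, and your handling of vertices with $c_t(v)=0$, fill in steps the paper leaves implicit.
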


\begin{proof}
Observe that for any $t$-clique $I=\{i_1,i_2,\ldots,i_t\}\in C_t(G)$, $\alpha(I)\le \alpha(i_j)$ for $j=1,\ldots,t$. Thus, by Theorem \ref{3}, we have
\begin{align*}
\left( \frac{\rho_t(G)}{t} \right)^{t}& \le \left( \sum_{I\in C_t(G)} \sqrt[t-1]{\frac{\binom{\alpha(I)}{t}}{\left(\alpha(I)\right)^t}} \right)^{t-1}\\
& \le \frac{1}{t^{t-1}}\left( \sum_{\{i_1,i_2,\ldots,i_t\}=I\in C_t(G)}\sum_{j=1}^{t} \sqrt[t-1]{ \frac{\binom{\alpha(i_j)}{t}}{\left(\alpha(i_j)\right)^t}} \right)^{t-1}\\
& = \frac{1}{t^{t-1}} \left( \sum_{v\in V(G)} c_{t}(v) \sqrt[t-1]{\frac{\binom{\alpha(v)}{t}}{\left(\alpha(v)\right)^t}} \right)^{t-1}.
\end{align*}

Next,we consider the equality. By Theorem \ref{3}, the equality holds in the last inequality if and only if the graph obtained from $G$ by deleting edges not contained in $t$-cliques is a complete $t$- partite graph for $\omega=t$, or a complete regular $\omega$-partite graph for $\omega\ge t+1$ (possibly with some isolated vertices). In this condition, for any $t$-clique $I=\{i_1,i_2,\ldots,i_t\}\in C_t(G)$, $\alpha(I) =\alpha(i_j)=\omega$ for $j=1,\ldots,t$,  the equalities also hold in the further inequality, completing the proof.
\end{proof}

When $t=2$, Corollary \ref{vertex} is a localized version of Wilf's equality given in \cite{liulelearxiv}.
The following result can provide a new upper bound on $t$-clique spectral radius based on $\alpha(v)$.

\begin{thm}\label{le}
Let $G$ be an $n$-vertex graph with clique number $\omega$ and let $2\le t\le\omega$. Then
$$\sum_{v\in V(G)}\frac{c_t(v)}{t}\sqrt[t-1]{\binom{\alpha(v)}{t}\left(\alpha(v)\right)^{-t}} \le \left( \sum_{v\in V(G)}\sqrt[t-1]{\binom{\alpha(v)}{t}\left(\alpha(v)\right)^{-t}} \right)^{t}.$$
Equality holds if and only if the graph obtained from $G$ by deleting edges not contained in $t$-cliques is a complete regular $\omega$-partite graph (possibly with some isolated vertices).
\end{thm}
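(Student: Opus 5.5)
The plan is to apply Lemma~\ref{w2} to a vector built from the vertex weights, and then compare the resulting sum with the left-hand side clique by clique using weighted AM--GM.

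\textbf{Setup.} For $v\in V(G)$ write
$$w(v)=\sqrt[t-1]{\binom{\alpha(v)}{t}\left(\alpha(v)\right)^{-t}},\qquad g(v)=\left(\alpha(v)\right)^t\binom{\alpha(v)}{t}^{-1}=w(v)^{-(t-1)},$$
and $S=\sum_{v\in V(G)}w(v)$. Every vertex lies in a $1$-clique, so $\alpha(v)\ge 1$. If $\alpha(v)<t$ then $w(v)=0$, but such a $v$ lies in no $t$-clique, so $c_t(v)=0$. Every vertex in a $t$-clique has $w(v)>0$.

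\textbf{Rewriting the left-hand side.} Since
$$\sum_{v}c_t(v)w(v)=\sum_{I=\{i_1,\ldots,i_t\}\in C_t(G)}\sum_{j=1}^t w(i_j),$$
the left-hand side equals $\frac1t\sum_{I}\sum_j w(i_j)$.

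\textbf{Applying Lemma~\ref{w2}.} Set $x_v=w(v)/S$, so that $x\in\mathbb{R}^n_+$ and $\|x\|_1=1$. Lemma~\ref{w2} gives
$$\sum_{I\in C_t(G)}\frac1t\sum_{j=1}^t w(i_j)^{-(t-1)}\prod_{k=1}^t w(i_k)\le S^t .$$
It therefore suffices to prove, for each $t$-clique $I$ with positive numbers $w_k=w(i_k)$, the per-clique inequality
$$\sum_{m=1}^t w_m\le \sum_{j=1}^t T_j,\qquad T_j=\frac{\prod_k w_k}{w_j^{t-1}}.$$
This is the key step.

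\textbf{Proving the per-clique inequality.} I would use weighted AM--GM with Muirhead-type exponent matching. The monomial $T_j$ has exponent $2-t$ on $w_j$ and exponent $1$ on each $w_k$ with $k\ne j$. Fix $m$ and average the $T_j$ with $j\ne m$, each with weight $\frac1{t-1}$.
\begin{itemize}
\item The exponent of $w_m$ in this average is $\frac{t-1}{t-1}=1$.
\item For $k\ne m$, the exponent of $w_k$ is $\frac{1}{t-1}\big((2-t)+(t-2)\big)=0$.
\end{itemize}
Hence weighted AM--GM gives $\frac1{t-1}\sum_{j\ne m}T_j\ge w_m$. Summing over $m$, each $T_j$ appears $t-1$ times, which yields $\sum_j T_j\ge\sum_m w_m$. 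For $t=2$ this is simply an identity. Combining with the previous step proves the inequality of the theorem.

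\textbf{Equality.} Equality requires two things.
\begin{itemize}
\item Equality in AM--GM for every $t$-clique, i.e.\ all $w(i_j)$ on each $t$-clique are equal (when $t\ge3$).
\item Equality in Lemma~\ref{w2}: the graph on $\mathrm{supp}(x)$ is complete $\omega$-partite with $\sum_{v\in V_i}x_v=1/\omega$.
\end{itemize}
Note that $\mathrm{supp}(x)$ is exactly the set of vertices lying in $t$-cliques, so this is the graph $G'$ obtained by deleting edges not in $t$-cliques, apart from isolated vertices. In a complete $\omega$-partite $G'$ every vertex has $\alpha(v)=\omega$, so $x$ is constant on $\mathrm{supp}(x)$, and the class-sum condition forces all classes to have equal size. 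Conversely, for a complete regular $\omega$-partite $G'$ all these equalities hold.

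The main obstacle I anticipate is this equality bookkeeping. Two points need care: reconciling the vertex-based $\alpha(v)$ in $G$ with the structure of $G'$, and handling the case $\omega=t$, where regularity is still forced by the condition $\sum_{v\in V_i}x_v=1/\omega$ because $x$ is constant on $\mathrm{supp}(x)$.
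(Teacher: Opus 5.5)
Your proposal is correct and follows essentially the same route as the paper. Both normalize the vertex-weight vector $(w(v))_v$, apply Lemma~\ref{w2}, and compare clique by clique via $\sum_j w(i_j)\le\sum_j T_j$; the paper cites this as a special case of Muirhead's inequality (with $z_j=1/w(i_j)$), whereas you prove it directly by weighted AM--GM. Your equality analysis is, if anything, more careful than the paper's: you deduce $\alpha(v)=\omega$ on $\mathrm{supp}(x)$ from the complete $\omega$-partite structure, and you note that the per-clique step is an identity when $t=2$.
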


\begin{proof}
A specific form of Muirhead's inequality states that for positive real number $z_1,z_1,\ldots,z_m$,
$$\sum_{k=1}^{m}\frac{1}{z_k}\le\left(\sum_{k=1}^{m}z_k^{m-1}\right)\frac{1}{\prod_{j=1}^{m}z_j},$$ and equality holds if and only if $z_1=\cdots=z_m$.
Then, we have
\begin{align*}
	 \sum_{v\in V(G)}\frac{c_t(v)}{t}\sqrt[t-1]{\frac{\binom{\alpha(v)}{t}}{\left(\alpha(v)\right)^t}}
= & \frac{1}{t} \sum_{\{i_1,i_2,\ldots,i_t\}\in C_t(G)}\left( \sum_{j=1}^{t} \sqrt[t-1]{\frac{\binom{\alpha(i_j)}{t}}{\left(\alpha(i_j)\right)^t}} \right)\\
\le & \frac{1}{t} \sum_{\{i_1,i_2,\ldots,i_t\}\in C_t(G)} \left( \left( \sum_{j=1}^{t} \frac{\left(\alpha(i_j)\right)^t}{\binom{\alpha(i_j)}{t}} \right) \prod_{j=1}^{t}\sqrt[t-1]{\frac{\binom{\alpha(i_j)}{t}}{\left(\alpha(i_j)\right)^t}} \right)\\
= & t \sum_{\{i_1,i_2,\ldots,i_t\}\in C_t(G)} \left( \left( \frac{1}{t^2} \sum_{j=1}^{t} \frac{\left(\alpha(i_j)\right)^t}{\binom{\alpha(i_j)}{t}} \right) \prod_{j=1}^{t}\sqrt[t-1]{\frac{\binom{\alpha(i_j)}{t}}{\left(\alpha(i_j)\right)^t}} \right).
\end{align*}
Let $V(G)=\{v_1,v_2,\ldots,v_n\}$. We construct a vector
$$x=\left( \sqrt[t-1]{\frac{\binom{\alpha(v_1)}{t}}{\left(\alpha(v_1)\right)^t}}, \sqrt[t-1]{\frac{\binom{\alpha(v_2)}{t}}{\left(\alpha(v_2)\right)^t}}, \ldots, \sqrt[t-1]{\frac{\binom{\alpha(v_n)}{t}}{\left(\alpha(v_n)\right)^t}} \right)^{\mathsf{T}}\in\mathbb{R}^{n}_{+}.$$
And let $\mathcal{W}(G)=(w_{i_1i_2\cdots i_t})$ be the weight $t$-clique tensor of $G$ with $w_{i_1i_2\cdots t_t}=\omega_{i_1i_2\cdots i_t}\frac{1}{(t-1)!}$ if $\{i_1,i_2,\ldots,i_t\}$ is a $t$-clique in $G$ and $w_{i_1i_2\cdots t_t}=0$ otherwise, where $\omega_{i_1i_2\cdots i_t}=\frac{1}{t^2} \sum_{j=1}^{t} \frac{\left(\alpha(i_j)\right)^t}{\binom{\alpha(i_j)}{t}}$.
Then
\begin{align*}
	\sum_{v\in V(G)}\frac{c_t(v)}{t}\sqrt[t-1]{\frac{\binom{\alpha(v)}{t}}{\left(\alpha(v)\right)^t}}\le \mathcal{W}(G)x^{t}.
\end{align*}
Let $y=\frac{x}{||x||_{1}}\in\mathbb{R}^{n}_{+}$. Then $||y||_{1}=\sum_{i=1}^{n}y_i=1$. By Lemma \ref{w2}, we have
$$\mathcal{W}(G)x^t=\mathcal{W}(G)y^t \cdot ||x||_{1}^{t}\le ||x||_{1}^{t}.$$
Thus,
$$	\sum_{v\in V(G)}\frac{c_t(v)}{t}\sqrt[t-1]{\frac{\binom{\alpha(v)}{t}}{\left(\alpha(v)\right)^t}} \le \mathcal{W}(G)x^{t} \le ||x||_{1}^{t} = \left( \sum_{v\in V(G)}\sqrt[t-1]{\frac{\binom{\alpha(v)}{t}}{\left(\alpha(v)\right)^t}} \right)^{t}.$$

Next, we consider the equality in the above inequality.
If the graph obtained from $G$ by deleting edges not contained in $t$-cliques is a complete regular $\omega$-partite graph, we will verify that equality holds.
Without loss of generality, assume that $\omega|n$ and $G$ is a complete regular $\omega$-partite graph.
Then $\alpha(v)=\omega$ and $c_t(v)=\binom{\omega-1}{t-1}\left( \frac{n}{\omega} \right)^{t-1}$ for any $v\in V(G)$.
Therefore, we can proof that the equality holds.

Conversely, if equality holds, then by Lemma \ref{w2}, $G[\mathrm{supp}(y)]$ is a complete $\omega$-partite graph with partition $V_1, V_2, \ldots, V_{\omega}$ satisfy $\sum_{v\in V_i}y_v=\frac{1}{\omega}$ for all $i\in[\omega]$.
By Muirhead's inequality, we know that $\alpha(v)=\omega$ for each $v$ contained in $t$-cliques.
Hence, for any $i\in[\omega]$,
$$\frac{1}{\omega}=\frac{1}{||x||_1}\sum_{v\in V_i}\sqrt[t-1]{\frac{\binom{\alpha(v)}{t}}{\left(\alpha(v)\right)^t}}=\frac{|V_i|}{||x||_1}\sqrt[t-1]{\frac{\binom{\omega}{t}}{\omega^t}},$$
i.e., $|V_1|=|V_2|=\cdots=|V_{\omega}|$.
Consequently, the graph obtained from $G$ by deleting edges not contained in $t$-cliques is a complete regular $\omega$-partite graph.
\end{proof}

By the above theorem, a new upper bound for $\rho_{t}(G)$ is given as follows. When $t=2$, this is the conclusion given in \cite{liulelearxiv}, which serves as a local version of Wilf's theorem.

\begin{cor}\label{123}
Let $G$ be a graph with $t$-clique spectral radius $\rho_{t}(G)$ and clique number $\omega$ and let $2\le t\le\omega$. Then
$$\rho_{t}(G)\le t\left( \sum_{v\in V(G)} \sqrt[t-1]{\binom{\alpha(v)}{t}\left(\alpha(v)\right)^{-t}} \right)^{t-1} .$$
Equality holds if and only if the graph obtained from $G$ by deleting edges not contained in $t$-cliques is a complete regular $\omega$-partite graph (possibly with some isolated vertices).
\end{cor}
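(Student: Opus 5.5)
The plan is to chain the two preceding results, Corollary \ref{vertex} and Theorem \ref{le}, both of which control the intermediate weighted clique count
$$T:=\sum_{v\in V(G)} c_t(v)\, a_v, \qquad\text{where } a_v:=\sqrt[t-1]{\binom{\alpha(v)}{t}\left(\alpha(v)\right)^{-t}}\quad\text{and}\quad S:=\sum_{v\in V(G)} a_v .$$
Corollary \ref{vertex} says exactly that $\rho_t(G)^t\le t\,T^{t-1}$. Theorem \ref{le} says $T/t\le S^t$, i.e. $T\le t\,S^t$.

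Substituting the second bound into the first gives
$$\rho_t(G)^t\le t\,(t S^t)^{t-1}=t^t S^{t(t-1)}.$$
Taking $t$-th roots yields $\rho_t(G)\le t\,S^{t-1}$, which is the claimed inequality. Monotonicity of $u\mapsto u^{t-1}$ on $[0,\infty)$ justifies the substitution.

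For the equality case, equality in the chain forces equality both in Corollary \ref{vertex} and in Theorem \ref{le}, since $u\mapsto u^{t-1}$ is strictly increasing for $t\ge 2$.

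\emph{Necessity.} Equality in Theorem \ref{le} alone already forces the graph $G'$, obtained from $G$ by deleting edges not contained in $t$-cliques, to be a complete regular $\omega$-partite graph, possibly with isolated vertices. Corollary \ref{vertex} permits a non-regular complete $t$-partite $G'$ when $\omega=t$, but such graphs are excluded by Theorem \ref{le}. So the intersection of the two equality conditions is exactly "complete regular $\omega$-partite".

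\emph{Sufficiency.} If $G'$ is complete regular $\omega$-partite, then both Corollary \ref{vertex} (in either case $\omega=t$ or $\omega\ge t+1$) and Theorem \ref{le} attain equality. Hence the composed inequality is tight.

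The only point requiring care is this equality bookkeeping: checking that the regular case is contained in the equality set of Corollary \ref{vertex} when $\omega=t$. This holds because a complete regular $t$-partite graph is in particular complete $t$-partite. One also needs that isolated vertices, which have $c_t(v)=0$, are treated consistently in both statements.
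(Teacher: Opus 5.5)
Your proposal is correct and matches the paper's proof: the paper also chains Corollary \ref{vertex} with Theorem \ref{le} as $\left(\rho_t(G)/t\right)^t\le (T/t)^{t-1}\le S^{t(t-1)}$, and it obtains the equality case from the equality cases of those two results. Your equality argument is more explicit than the paper's one-line appeal. Necessity comes from Theorem \ref{le} alone via strict monotonicity of $u\mapsto u^{t-1}$, and sufficiency holds because the complete regular $\omega$-partite case lies in both equality sets.
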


\begin{proof}
By Corollary \ref{vertex} and Theorem \ref{le}, we have
\begin{align*}
	\left( \frac{\rho_{t}(G)}{t} \right)^{t} & \le \frac{1}{t^{t-1}} \left( \sum_{v\in V(G)} c_{t}(v) \sqrt[t-1]{\frac{\binom{\alpha(v)}{t}}{\left(\alpha(v)\right)^t}} \right)^{t-1}\\
	& = \left( \sum_{v\in V(G)} \frac{c_{t}(v)}{t} \sqrt[t-1]{\frac{\binom{\alpha(v)}{t}}{\left(\alpha(v)\right)^t}} \right)^{t-1}\\
	& \le \left( \sum_{v\in V(G)}\sqrt[t-1]{\frac{\binom{\alpha(v)}{t}}{\left(\alpha(v)\right)^t}} \right)^{t(t-1)}.
\end{align*}
By Corollary \ref{vertex} and Theorem \ref{le}, we can characterize all graphs attaining equalities in the above inequalities.
\end{proof}

Combining Lemma \ref{ct&rhot} and  Corollary \ref{vertex}, a upper bound on the number of $t$-cliques can be given directly.

\begin{cor}\label{clique vertex}
Let $G$ be an $n$-vertex graph with clique number $\omega$ and
let $2\leq t\leq \omega$. Then
$$|C_t(G)|\leq n\left( \frac{1}{t}\sum_{v\in V(G)}c_{t}(v) \sqrt[t-1]{{\binom{\alpha(v)}{t}}{\left(\alpha(v)\right)^{-t}}} \right)^{\frac{t-1}{t}}.$$
Equality holds if and only if the graph obtained from $G$ by deleting edges not contained in $t$-cliques is a complete regular $\omega$-partite graph (possibly with some isolated vertices).
\end{cor}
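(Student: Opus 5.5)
The plan is to chain Lemma \ref{ct&rhot} with Corollary \ref{vertex}, and then to read off the equality case from the equality cases of those two results.

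\textbf{The inequality.} Write $S(v)=\sqrt[t-1]{\binom{\alpha(v)}{t}\left(\alpha(v)\right)^{-t}}$ and $\Sigma=\sum_{v\in V(G)}c_t(v)S(v)$.
\begin{itemize}
\item Corollary \ref{vertex} gives $\rho_t(G)^t\le t\,\Sigma^{t-1}$, that is, $\rho_t(G)\le t^{1/t}\Sigma^{\frac{t-1}{t}}$.
\item Lemma \ref{ct&rhot} gives $|C_t(G)|\le \frac{n}{t}\rho_t(G)$.
\item Substituting, $|C_t(G)|\le n\,t^{\frac1t-1}\Sigma^{\frac{t-1}{t}}=n\left(\frac{\Sigma}{t}\right)^{\frac{t-1}{t}}$, which is the claimed bound.
\end{itemize}

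\textbf{Sufficiency of the equality condition.} Suppose $G$ is a complete regular $\omega$-partite graph with $\omega\mid n$.
\begin{itemize}
\item Every vertex lies in the same number $c_t(v)=\binom{\omega-1}{t-1}(n/\omega)^{t-1}$ of $t$-cliques, so equality holds in Lemma \ref{ct&rhot}.
\item Corollary \ref{vertex} also attains equality for this graph.
\end{itemize}
The chained inequality is therefore tight.

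\textbf{Necessity of the equality condition.} If equality holds in the final bound, both intermediate inequalities must be tight.
\begin{itemize}
\item Equality in Corollary \ref{vertex} forces the graph $G'$, obtained from $G$ by deleting edges not contained in $t$-cliques, to be complete $\omega$-partite. It is complete $t$-partite when $\omega=t$, and complete regular $\omega$-partite when $\omega\ge t+1$.
\item Equality in Lemma \ref{ct&rhot} forces $c_t(v)$ to be the same for all $v$.
\item The only case not already settled is $\omega=t$. There, for $v\in V_i$ we have $c_t(v)=\prod_{j\ne i}|V_j|$. Constancy of this quantity over $i$ forces all $|V_i|$ to be equal, so $G'$ is a complete regular $t$-partite graph as well.
\end{itemize}

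\textbf{Main obstacle: the phrase ``possibly with some isolated vertices''.} Lemma \ref{ct&rhot} requires $c_t(v)$ to be constant over all $n$ vertices. An isolated vertex of $G'$ (a vertex lying in no $t$-clique) has $c_t(v)=0$ while the other vertices have positive $c_t(v)$. Such a vertex therefore makes the first inequality strict. I would handle this by proving the characterization with $n$ counting only the vertices lying in $t$-cliques, since such isolated vertices contribute nothing to $\Sigma$ or to $C_t(G)$, and then remarking that the clause must be interpreted in this sense. Equivalently, without that reinterpretation the equality case holds exactly when $G'$ has no isolated vertices.
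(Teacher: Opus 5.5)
Your proof is correct and follows the paper's route exactly: the paper obtains this corollary by chaining Lemma \ref{ct&rhot} with Corollary \ref{vertex}, and your extra argument for $\omega=t$ (constancy of $c_t(v)=\prod_{j\ne i}|V_j|$ forcing equal part sizes) supplies a step the paper leaves implicit. Your caveat about isolated vertices is also correct and worth keeping: the factor $n$ counts vertices lying in no $t$-clique, so already $K_{2,2}$ plus an isolated vertex (with $t=2$) gives $|C_2(G)|=4<5$; hence equality forces $G'$ to have no isolated vertices, and the parenthetical in the stated equality condition is inaccurate for this corollary.
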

Combining Theorem \ref{le} and  Corollary \ref{clique vertex}, we can get a weaker upper bound on the number of $t$-cliques but without $c_{t}(v)$.

\begin{cor}\label{124}
Let $G$ be an $n$-vertex graph with clique number $\omega$ and let $2\le t\le\omega$. Then
$$|C_t(G)|\leq n\left( \sum_{v\in V(G)} \sqrt[t-1]{\binom{\alpha(v)}{t}\left(\alpha(v)\right)^{-t}} \right)^{t-1}.$$
Equality holds if and only if the graph obtained from $G$ by deleting edges not contained in $t$-cliques is a complete regular $\omega$-partite graph (possibly with some isolated vertices).
\end{cor}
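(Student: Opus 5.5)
The plan is to chain Corollary \ref{clique vertex} with Theorem \ref{le}, exactly as the sentence preceding the statement suggests. Throughout, write
$$r_v=\sqrt[t-1]{\binom{\alpha(v)}{t}\left(\alpha(v)\right)^{-t}},\qquad T=\frac{1}{t}\sum_{v\in V(G)}c_t(v)\,r_v,\qquad S=\sum_{v\in V(G)}r_v .$$
With this notation, Corollary \ref{clique vertex} reads $|C_t(G)|\le n\,T^{\frac{t-1}{t}}$, and Theorem \ref{le} reads $T\le S^{t}$.

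First, $T\ge 0$ and $S\ge 0$, and the map $s\mapsto s^{\frac{t-1}{t}}$ is nondecreasing on $[0,\infty)$. So Theorem \ref{le} gives
$$T^{\frac{t-1}{t}}\le \left(S^{t}\right)^{\frac{t-1}{t}}=S^{t-1}.$$
Substituting this into Corollary \ref{clique vertex} yields
$$|C_t(G)|\le n\,T^{\frac{t-1}{t}}\le n\,S^{t-1},$$
which is the claimed inequality. No new estimate is needed; the only care required is tracking the exponents.

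For the equality case, suppose first that the graph obtained from $G$ by deleting edges not contained in $t$-cliques is a complete regular $\omega$-partite graph (possibly with isolated vertices). Then equality holds in Corollary \ref{clique vertex} by its characterization, and also in Theorem \ref{le} by its characterization. Since $s\mapsto s^{\frac{t-1}{t}}$ is strictly increasing, equality in $T\le S^t$ transfers to equality in $T^{\frac{t-1}{t}}\le S^{t-1}$, so both steps of the chain are tight.

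Conversely, suppose $|C_t(G)|=n\,S^{t-1}$. Then both inequalities in the chain are equalities; in particular $T=S^t$ holds, by strict monotonicity. The equality characterization of Theorem \ref{le} (or equally that of Corollary \ref{clique vertex}) then forces the graph obtained by deleting edges not contained in $t$-cliques to be a complete regular $\omega$-partite graph.

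The only point needing attention is that the equality conditions of the two ingredients coincide, so the combined characterization is exactly the stated one. I do not expect any step to be a genuine obstacle beyond that bookkeeping.
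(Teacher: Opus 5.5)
Your chain $|C_t(G)|\le nT^{\frac{t-1}{t}}\le nS^{t-1}$ is the paper's own argument; the paper justifies the corollary only by ``combining Theorem \ref{le} and Corollary \ref{clique vertex}''. Your inequality is correct. The gap is in the equality case, at the point you treated as bookkeeping: the two equality conditions do \emph{not} coincide. Your sufficiency step, ``equality holds in Corollary \ref{clique vertex} by its characterization'', fails whenever $G'$ (the graph with edges outside $t$-cliques deleted) has isolated vertices. Such a vertex $v$ lies in no $t$-clique, so $\alpha(v)\le t-1$, $\binom{\alpha(v)}{t}=0$ and $r_v=0$. It therefore adds $1$ to $n$ but nothing to $T$ or $S$. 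For example, take $G=K_2\cup K_1$ and $t=2$: then $|C_2(G)|=1$, but $nT^{1/2}=nS=3\cdot\frac12=\frac32$. In general, let $V_1\subsetneq V(G)$ be the set of vertices lying in $t$-cliques. Then $G[V_1]$ has the same $t$-cliques, the same clique number and the same $\alpha(v)$ on $V_1$, so the inequality applied to it gives $|C_t(G)|\le |V_1|S^{t-1}<nS^{t-1}$. The underlying reason is that equality in Corollary \ref{clique vertex} forces equality in $|C_t(G)|\le\frac nt\rho_t(G)$. That means all $c_t(v)$ are equal, hence all positive. The equality condition of Theorem \ref{le}, by contrast, is blind to isolated vertices.

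So the clause ``possibly with some isolated vertices'' is false, both in the statement and in Corollary \ref{clique vertex} as you cited it. No proof can establish it, and checking the extremal case directly, instead of importing the cited characterization, would have exposed this. The correct characterization is: equality holds iff $G'$ is a complete regular $\omega$-partite graph with no isolated vertices. In that case $G'=G$, because an edge of $G$ inside a part, together with one vertex from each other part, would form a $K_{\omega+1}$. Equivalently, equality holds iff $G$ itself is complete regular $\omega$-partite. Your necessity argument proves this once you route it through the equality case of $|C_t(G)|\le\frac nt\rho_t(G)$, which rules out isolated vertices. Sufficiency is then the direct computation $|C_t(G)|=\binom{\omega}{t}(n/\omega)^t=nS^{t-1}$, using $r_v=\bigl(\binom{\omega}{t}\omega^{-t}\bigr)^{\frac{1}{t-1}}$ for every vertex $v$.
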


\begin{remark}
From Corollary \ref{124}, by H\H{o}lder's inequality, we have
\begin{align*}
|C_t(G)|&\leq n\left( \sum_{v\in V(G)} \sqrt[t-1]{\frac{\binom{\alpha(v)}{t}}{\left(\alpha(v)\right)^t}} \right)^{t-1}\\
&\leq n\left(\left(\sum_{v\in V(G)}1^{\frac{t-1}{t-2}} \right)^{\frac{t-2}{t-1}}\left(\sum_{v\in V(G)} {\frac{\binom{\alpha(v)}{t}}{\left(\alpha(v)\right)^t}}\right)^{\frac{1}{t-1}}\right)^{t-1}\\
&=n^{t-1}\left(\sum_{v\in V(G)} {\frac{\binom{\alpha(v)}{t}}{\left(\alpha(v)\right)^t}}\right),
\end{align*}
which is a vertex-based localized Zykov's inequality given in \cite{dian4}.
By H\H{o}lder's inequality, equality holds in the second inequality if and only if $\frac{\binom{\alpha(v)}{t}}{\left(\alpha(v)\right)^t}=c$ for each vertex $v$ contained in $t$-cliques and some constant $c>0$, which is equivalent to the order of the largest clique containing each vertex in $G$ is equal.
Thus, if there exist two vertices $u,v\in V(G)$ with $\alpha(u)\ge t$, $\alpha(v)\ge t$ and $\alpha(u)\ne \alpha(v)$, then the upper bound of the number $t$-cliques given in Corollary \ref{124} is strictly less than that in \cite{dian4}.
\end{remark}

\vspace{3mm}



\section*{References}
\bibliographystyle{plain}
\bibliography{spbib}
\end{spacing}
\end{document}